\newcommand{\be}{\begin{equation}}
\newcommand{\ee}{\end{equation}}
\newtheorem{theorem}{Theorem}[section]
\newtheorem{lemma}[theorem]{Lemma}
\newtheorem{prop}[theorem]{Proposition}
\newtheorem{coro}[theorem]{Corollary}
\newtheorem{ex}[theorem]{Example}
\begin{document}  
\title{  RELATED REFLECTIONS  TO THE AXIOMS OF SEPARATION IN SEMIGROUPS WITH TOPOLOGIES AND SOME APPLICATIONS.}
\author{JULIO CÉSAR HERNÁNDEZ ARZUSA}

\maketitle 

\begin{abstract}
In this paper we study the reflections of the  category of topological and semitopological semigroups on the category of the class of topological spaces satisfying separation axioms $T_{0}$, $T_{1}$, $T_{2}$, $T_{3}$ and regular  and we apply its properties  for to find conditions under which a topological semigroup has the Souslin property.\\

\end{abstract}

\thanks{{\em 2010 Mathematics Subject Classification.} Primary: 54B30, 18B30, 54D10; Secondary: 54H10, 22A30 \\
{\em Key Words and Phrases: Cancellative topological Monoid, Souslin property, $\sigma$-compact space, Sequentially compact space. } }

\section*{\textbf{INTRODUCTION}.} Let $\mathcal{C}$ be an epireflective category of topological spaces (in what follows $TOP$) and let $\mathcal{C}$ be the  functor associated with $\mathcal{C}$. In \cite{HH} it is proven that each semitopological (separalaty continuous) algebraic  structure in $X\in Top$ is reflected in $\mathcal{C}(X)$, also it is proven that if the functor $\mathcal{C}$ respects finite products, then the topological (jointly continouos) algebraic  structures are reflected. In \cite{T4} we can find similar results, but only in semitopological groups. In this work we studied  the  functors  related to axioms of separation  in the category of topological and semitopological semigroups. Not having  inverse in the semigroup operation gives us certain problems, for example, one of them is that not always the quotient mappings induced  by congruences are open, what happens in semitopological groups. For this reason we have considered in ours results topological semigroups with open shifts, these are studied in \cite{B} by A. Ravsky and T. Banakh.\\
The fact that the classes of the $T_{0}$, $T_{1}$, $T_{2}$, $T_{3}$, Regular and Tychonoff spaces,  constitute  epiriflective subcategories  of $TOP$, is proven in  \cite{herl},  in \cite{T2} and \cite{T4}  are studied the related functors to  these reflections in  the category of the semitopological  and paratopological groups.\\

 We have developed this paper in four sections. \\In the first section we give the basic facts of the theory and the basic  notation. Additionally we give examples of topological (semitopological) semigroups with open shifts, in order to prove that this class contains the class of the semitopological (resp. paratopological) groups as a proper subclass.\\ In the second section,  we initially  construct the reflection on $T_{2}$ spaces  of semitopological monoids with open shifts (Theorem \ref{513}). In relation to $T_{2}$ semigroup we gived a partial answer (Corollary \ref{04152}) to the following  question: If $S$ is a topological semigroup and $\sim$ is a closed  congruence on $S$, is $S/\sim$ a topological semigroup? This problem is addressed in \cite{GONZO} and \cite{K}, but unlike these works, in our  results the condition being $T_{2}$ is not assumed for semitopological semigroups. Moreover,  we give a characterization of the reflection on $T_{3}$ spaces for topological monoids with open shifts(Corollary \ref{075}), where we proved the topological monoid with open shifts are quasiregular spaces, obtaining a more general result than  Theorem 2.4 in \cite{T2}. Finally we construct the reflection on regular spaces for topological monoids with open shifts (Theorem \ref{0153}). \\In the third section we find results about  epireflections that preserve pro\-ducts in the category of semigroups. Analogues results are found in \cite{T3}, but in the category of groups. 
 \\In the fourth section we use the $\mathcal{C}$-reflections  to study the cellularity of cancellative topological  monoids with open shifts. Given $\mathcal{C}_{r}$-reflection preserves the cellularity (Theorem \ref{0751}) we can obtain some  results releted to the Souslin property, without using separation axioms. In \cite{T5} M. Tkachenko proved each $\sigma$-compact topological group has countable cellularity, this result was generalized by V. Unspenskij in \cite{U}. Later  A. Arhalhensky and E. Reznychenko  extended it for Hausdorff $\sigma$-compact paratopological groups in \cite{A}. Finally M. Tkachenko proved that   the  $\sigma$-compact paratopological groups have countable cellularity. We present similar results for compact cancellative topological   monoids with open shifts (Theorem \ref{5132}), and $\sigma$-compact and  sequentially compact cancellative topological   monoids with open shifts (Theorem \ref{5131}).\\
 The  $T_{0}$, $T_{1}$, $T_{2}$, $T_{3}$, regular and Tychonoff spaces are defined according to \cite{T2}. 

\section{\textbf{PRELIMINARIES}}
 A \textit{semigroup} is a set $S\neq \emptyset$, endowed with an associative operation. If also $S$ has neutral element, we say that $S$ is a \textit{monoid}. A mapping $f\colon S\longrightarrow H$ between semigroups is a \textit{homomorphism} if $f(xy)=f(x)f(y)$ for all $x,y\in S$ . A \textit{semitopological semigroup} (monoid) consists of a semigroup (resp. monoid) $S$ and a topology $\tau$ on $S$ such that for all $a\in S$, the shifts $x\mapsto ax$ and $x\mapsto xa$ (noted by $l_{a}$ and $r_{a}$, respectively)  are continuous mapping of the $S$ to itself. We say that a semitopological semigroup has \textit{open shifts}, if for each $a\in S$ and for each open set  $U$ in $S$, we have $l_{a}(U)$ and $r_{a}(U)$ are open sets in $S$. A \textit{topological semigroup} (monoid)(\textit{paratopological group}) consists of a semigruop (resp. monoid)(resp. group) $S$ and a topology $\tau$ such that the operation of $S$, as a mapping of $S\times S$ to $S$ is continuous, when $S\times S$ is endowed with the product topology. An \textit{congruence} on a semigrouop $S$ is an equivalence relation on $S$, $\sim$, such that if $x\sim y$ and $a\sim b$, then $xa\sim yb$. If $S$ is a semitopological semigroup, then we say that $\sim$ is a \textit{closed congruence} if $\sim$ is closed in $S\times S$.  If $\sim$ is an equivalence relation in a semigroup (monoid) $S$ and $\pi\colon S\longrightarrow S/\sim$ is the respective quotient mapping, then $S/\sim$ is a semigroup (monoid) and $\pi$ an homomorphism  if and only if $\sim$ is a congruence (\cite{GONZO}, Theorem 1).\\ A  class $C$ of topological spaces, is called \textit{closed under super topologies} if  $(X,\tau)\in C$ implies $(X,\rho)\in C$ for each topology  $\rho$ on $X$ finer than $\tau$.\\ Let $X$ be a topological space, a \textit{cellular family} in $X$ is a non empty family of non empty open sets in $X$  and pairwise   disjoint. The \textit{cellularity} of a space $X$, noted by $c(X)$, is defined by $$c(X)=sup\{|U|: U \mbox{ is cellular familiy in $X$}\}+\aleph_{0}. $$ If $c(X)=\aleph_{0}$, we say that $X$ has \textit{countable cellularity} or $X$ \textit{has the Souslin property}.\\If $X$ is a topological space and $A\subseteq X$. We will note by $Int_{X}(A)$ and $Cl_{X}(A)$, the interior and the closure of $A$ in $X$, or simply $Int(A)$ and $\overline{A}$, respectively,  when the space $X$ is understood. An open set $U$ in $X$, is called \textit{regular open} in $X$ if $Int\overline{U}=U$. It is easy to prove the regular open ones form a base for a topology, which we will call semiregularitation of $X$, $X$ endowed with this topology, we will note by $X_{sr}$. $X$ is called \textit{quasiregular} if $X_{sr}$ is a $T_{3}$ space.\\
If $\mathcal{C}$ is an epireflective class of $TOP$, $X$ is a topological space and the morphism $r\colon X \longrightarrow B$ is the $\mathcal{C}$-reflection of $X$, then, given the reflections are essentially   unique, in order to agree with the notation,  we will note $r$ by $\varphi_{(\mathcal{C},X)}$ and $B$ by $\mathcal{C}(X)$. The functor induced by the $\mathcal{C}$-reflection, we will note it by $\mathcal{C}$, therefore if $f\colon X\longrightarrow Y$ is a continuous mapping, there is an unique continuous mapping $\mathcal{C}(f)\colon \mathcal{C}(X)\longrightarrow \mathcal{C}(Y)$, such that $\mathcal{C}(f)\circ \varphi_{(\mathcal{C},X)}=\varphi_{(\mathcal{C},Y)}\circ f$.
 $\mathcal{C}_{0}$, $\mathcal{C}_{1}$, $\mathcal{C}_{2}$, $\mathcal{C}_{3}$, $\mathcal{C}_{r}$ y $\mathcal{C}_{t}$ will note the class of the spaces, $T_{0}$, $T_{1}$, $T_{2}$, $T_{3}$, regular and Tychonoff, respectively.\\ A topological space $X$ is called \textit{$\sigma$-compact} if it is countable union of compact subsets. $X$ is called \textit{sequentially compact} if each sequence in $X$ has a subsequence converging in $X$. \\\\
The following examples guarantee the class of cancellative topological  monoids with open shifts in non empty and is bigger than the class of paratopological groups.

\begin{ex} Let $\mathbb{R}^{+}=[0,\infty)$, together with the usual sum in $\mathbb{R}$, endowed with the generated topology by the sets $[a,\infty)$, being $a\in \mathbb{R}^{+}$. Then $\mathbb{R}^{+}$ is a compact cancellative topological  monoid with open shifts and it is not a group.
\end{ex}
\begin{ex}Let $G$ be a paratopological group that is not a topological group (for example the Sorgenfrey line) and let $U$ be an open non symmetric ($U^{-1}\neq U$) neighborhood in $G$ of the neutral element $e$ of $G$. If $S=\bigcup_{n\in \mathbb{N}}U^{n}$, then $S$ is a cancellative topological  monoid with open shifts that is not a group.
\end{ex}
\begin{ex}\label{516}It is possible to obtain open shifts from semitopological (topological) monoids. Indeed  let $S$ be  a semitopological (topological) monoid and let $N_{e}$ be  an open local base of the neutral element $e$ of $S$. The set $\gamma=\{aU:U\in N_{e},a\in S\}\cup \{Ua:U\in N_{e},a\in S\}$ generates a topology of semitopological (resp. topological) monoid with open shifts.
\end{ex}

\begin{ex}Let $S=\{(x,y)\in \mathbb{R}^{2}: y\geq 0\}$ endowed with the usual sum, and with the generated topology by the family $\{V_{x}\}_{x \in \mathbb{R}}$, where $V_{x}=\{(x,y): y\geq 0\}$. Then $S$ is a cancellative topological  monoid with open shifts. Moreover, $S$ is not $T_{0}$ space.
\end{ex}

\begin{ex}Let $S$ be an infinite  cancellative  semigroup, if $S$ is endowed with the cofinite topology, then $S$ is a cancellative semitopological semigroup with open shifts.
\end{ex}

The following result clarifies the action the epi-reflection functor for subcategories closed under supertopologies. (see \cite{herlG} for the proof, which is
straightforward anyway).

\begin{prop}\label{0131}Let $\mathcal{C}$ be an epireflective class of $TOP$. Then $\mathcal{C}(X)$ is a quotient of $X$ if and only if $\mathcal{C}$ is closed under super topologies.
\end{prop}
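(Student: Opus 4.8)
The plan is to use the universal property of the $\mathcal{C}$-reflection together with two elementary facts about quotient maps: that the quotient topology is the finest topology making the projection continuous, and that a bijective quotient map is a homeomorphism. Throughout I would write $r=\varphi_{(\mathcal{C},X)}\colon X\to\mathcal{C}(X)$ and recall that, $\mathcal{C}$ being epireflective, $r$ is an epimorphism in $TOP$ and hence a surjection; I denote by $\tau$ the given topology of $\mathcal{C}(X)$.

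For the implication that closure under super topologies forces $\mathcal{C}(X)$ to be a quotient of $X$, I would put on the underlying set of $\mathcal{C}(X)$ the quotient topology $\tau_{q}$ induced by $r$, and call this space $Y$. Since $r\colon X\to(\mathcal{C}(X),\tau)$ is continuous and $\tau_{q}$ is the finest topology making $r$ continuous, we get $\tau\subseteq\tau_{q}$, so $Y$ is a super topology of $\mathcal{C}(X)\in\mathcal{C}$ and therefore $Y\in\mathcal{C}$ by hypothesis. Now $r\colon X\to Y$ is continuous into an object of $\mathcal{C}$, so the universal property yields a unique continuous $g\colon\mathcal{C}(X)\to Y$ with $g\circ r=r$ as maps into $Y$; because $r$ is surjective this forces $g$ to be the identity on the underlying set. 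Continuity of the identity $g\colon(\mathcal{C}(X),\tau)\to(\mathcal{C}(X),\tau_{q})$ then gives $\tau_{q}\subseteq\tau$, whence $\tau=\tau_{q}$ and $r$ is a quotient map.

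For the converse I would assume $\mathcal{C}(X)$ is a quotient of $X$ for every $X$, take $(Z,\sigma)\in\mathcal{C}$ and a finer topology $\rho\supseteq\sigma$, and set $X=(Z,\rho)$. The identity $\mathrm{id}\colon(Z,\rho)\to(Z,\sigma)$ is continuous, so it factors through the reflection as $\mathrm{id}=h\circ r$ for some continuous $h\colon\mathcal{C}(X)\to(Z,\sigma)$. Since $h\circ r$ is a bijection, $r$ is injective, and being also surjective it is a continuous bijection; by hypothesis $r$ is moreover a quotient map, and a bijective quotient map is a homeomorphism. Hence $(Z,\rho)$ is homeomorphic to $\mathcal{C}(X)\in\mathcal{C}$, and as $\mathcal{C}$ is isomorphism-closed we conclude $(Z,\rho)\in\mathcal{C}$, i.e. $\mathcal{C}$ is closed under super topologies.

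I expect the only real care to lie in the bookkeeping: one must invoke the surjectivity of the reflection (to pin down $g$ in the first direction, and to upgrade $r$ to a bijection in the second) together with the uniqueness clause of the universal property, and then apply the two quotient-topology facts in the correct direction. No genuine obstacle arises beyond this diagram chase, which is why the statement is essentially formal.
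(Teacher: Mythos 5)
The paper gives no proof of this proposition; it simply refers the reader to Herrlich--Strecker and calls the argument straightforward, so there is nothing to compare against line by line. Your argument is correct and complete: both directions use exactly the standard devices (the quotient topology as the finest topology making $r$ continuous, the uniqueness clause of the universal property to force $g=\mathrm{id}$, surjectivity of the reflection arrow in $TOP$, and the fact that a bijective quotient map is a homeomorphism), and it supplies precisely the "straightforward" proof the paper omits.
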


The Theorem 3.4 in \cite{T4} allows us to obtain a inner characterization of the $\mathcal{C}_{1}$-reflection in the category of semitopological groups, the following corollary is more general.

\begin{prop}\label{514}Let  $\mathcal{C}$ be  an epireflective subcategory of $TOP$ closed under super topologies. Then $\mathcal{C}(X)=X/R_{\mathcal{C}}$, where $$R_{\mathcal{C}}=\bigcap\{R: \mbox{ $R$  is an equivalence relation and } X/R\in \mathcal{C}\}.$$
\end{prop}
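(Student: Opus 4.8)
The plan is to show that the reflection map is nothing but the canonical projection onto $X/R_{\mathcal{C}}$, and to do this by identifying the kernel equivalence relation of the reflection with $R_{\mathcal{C}}$ itself. First I would use the hypothesis that $\mathcal{C}$ is closed under super topologies to invoke Proposition \ref{0131}, which guarantees that the reflection $\varphi_{(\mathcal{C},X)}\colon X\longrightarrow \mathcal{C}(X)$ is a quotient map. Let $R_{0}$ denote its kernel, i.e. $x\,R_{0}\,y$ if and only if $\varphi_{(\mathcal{C},X)}(x)=\varphi_{(\mathcal{C},X)}(y)$. Since a quotient map induces a homeomorphism from the quotient by its kernel, $\mathcal{C}(X)$ is homeomorphic to $X/R_{0}$ endowed with the quotient topology, and under this identification $\varphi_{(\mathcal{C},X)}$ becomes the canonical projection $\pi_{R_{0}}$. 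Hence it is enough to prove that $R_{0}=R_{\mathcal{C}}$ as equivalence relations, because then $X/R_{\mathcal{C}}$ and $X/R_{0}=\mathcal{C}(X)$ coincide as topological spaces, both being quotients of the same $X$ by the same relation.

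The inclusion $R_{\mathcal{C}}\subseteq R_{0}$ is immediate: since $X/R_{0}\cong\mathcal{C}(X)\in\mathcal{C}$, the relation $R_{0}$ is one of the relations over which the intersection defining $R_{\mathcal{C}}$ is taken, so $R_{\mathcal{C}}\subseteq R_{0}$ (this also shows the family is nonempty, so $R_{\mathcal{C}}$ is genuinely defined). For the reverse inclusion I would fix an arbitrary equivalence relation $R$ with $X/R\in\mathcal{C}$ and apply the universal property of the reflection to the continuous projection $\pi_{R}\colon X\longrightarrow X/R$: this yields a unique continuous map $g\colon\mathcal{C}(X)\longrightarrow X/R$ with $g\circ\varphi_{(\mathcal{C},X)}=\pi_{R}$. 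Then whenever $x\,R_{0}\,y$ we have $\varphi_{(\mathcal{C},X)}(x)=\varphi_{(\mathcal{C},X)}(y)$, and applying $g$ gives $\pi_{R}(x)=\pi_{R}(y)$, that is $x\,R\,y$; thus $R_{0}\subseteq R$. As $R$ ranges over all relations with $X/R\in\mathcal{C}$, intersecting gives $R_{0}\subseteq R_{\mathcal{C}}$, and combining the two inclusions yields $R_{0}=R_{\mathcal{C}}$, which is what we wanted.

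The step that requires the most care is the very first one: the entire reduction hinges on being allowed to speak of \emph{the} kernel relation and on identifying $\mathcal{C}(X)$ with $X/R_{0}$ \emph{as topological spaces}, and this rests precisely on $\varphi_{(\mathcal{C},X)}$ being a quotient map, which is exactly where closure under super topologies enters through Proposition \ref{0131}. Without that hypothesis the comparison maps $X/R_{\mathcal{C}}\to X/R$ would still exist set-theoretically, but the quotient topology on $X/R_{\mathcal{C}}$ need not agree with the reflection topology, so the argument would only establish an equality of underlying equivalence relations and not of spaces. I would also remark that the collection of relations $R$ with $X/R\in\mathcal{C}$ forms a genuine set, each being a subset of $X\times X$, so no foundational difficulty arises in forming the intersection $R_{\mathcal{C}}$.
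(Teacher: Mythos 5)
Your proof is correct and follows essentially the same route as the paper: both invoke Proposition \ref{0131} to realize $\mathcal{C}(X)$ as a quotient $X/R_{0}$ and then identify the kernel relation $R_{0}$ with $R_{\mathcal{C}}$. In fact you are more explicit than the paper, which asserts that the induced map $X/R_{\mathcal{C}}\to X/R_{0}$ is bijective without spelling out the inclusion $R_{0}\subseteq R_{\mathcal{C}}$; your use of the universal property of the reflection applied to each $\pi_{R}$ supplies exactly that missing detail.
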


\begin{proof}
From Proposition \ref{0131}, $\mathcal{C}(X)=X/R_{1}$, being $R_{1}$ an equivalence relation in $X$. If $\pi\colon X\longrightarrow X/R_{\mathcal{C}}$ is the respective quotient mapping, since $R_{\mathcal{C}}\subseteq R_{1}$, the mapping $f\colon X/R_{\mathcal{C}}\longrightarrow X/R_{1}$, defined by $f(\pi(x))=\varphi_{(\mathcal{C}, X)}$ is well defined, bijective and quotient, therefore it  is homeomorphism, so that $R_{1}=R_{\mathcal{C}}$, this completes the proof.
\end{proof}

We can find the proof of following proposition in \cite{ACOSTA}, where the reflection on the $T_{0}$ spaces is called the $T_{0}$-identification.

\begin{prop}\label{0152}Let $X$ be a topological space. Then $\varphi_{(\mathcal{C}_{0},X)}^{-1}(\varphi_{(\mathcal{C}_{0},X)}(U))=U$ for each closed or open set in $X$, therefore $\varphi_{(\mathcal{C}_{0},S)}$ is open and closed mapping. Also $\varphi_{(\mathcal{C}_{0},X)}(x)\neq \varphi_{(\mathcal{C}_{0},X)}(y)$ if and only there exist $U$, open in $X$, such that $U\cap \{x,y\}$ is a singleton.
\end{prop}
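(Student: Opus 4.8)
The plan is to first make explicit the equivalence relation underlying the $T_{0}$-reflection and then read off all three assertions from it. Since the class $\mathcal{C}_{0}$ is plainly closed under super topologies (any topology finer than a $T_{0}$ topology is again $T_{0}$), Proposition \ref{0131} gives that $\varphi_{(\mathcal{C}_{0},X)}$ is a quotient mapping, and Proposition \ref{514} gives $\mathcal{C}_{0}(X)=X/R_{\mathcal{C}_{0}}$. My first step is to identify $R_{\mathcal{C}_{0}}$ with the relation of topological indistinguishability, i.e. $x\sim y$ if and only if $x\in U\Leftrightarrow y\in U$ for every open set $U$. On one hand $X/{\sim}$ is $T_{0}$, so $R_{\mathcal{C}_{0}}\subseteq{\sim}$; on the other hand, if $R$ is any equivalence relation with $X/R$ a $T_{0}$ space and $x\sim y$, then pulling back through the quotient map any open set of $X/R$ separating $[x]_{R}$ from $[y]_{R}$ would produce an open set of $X$ containing exactly one of $x,y$, contradicting $x\sim y$; hence ${\sim}\subseteq R$ for every such $R$, so ${\sim}\subseteq R_{\mathcal{C}_{0}}$. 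This yields $R_{\mathcal{C}_{0}}={\sim}$.

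The key observation is then that open sets are $\sim$-saturated: if $U$ is open, $x\in U$ and $x\sim y$, then taking $U$ itself in the definition of $\sim$ forces $y\in U$. By passing to complements, closed sets are $\sim$-saturated as well. Now for any $U\subseteq X$ one has $\varphi_{(\mathcal{C}_{0},X)}^{-1}(\varphi_{(\mathcal{C}_{0},X)}(U))=\{x\in X:\exists\,u\in U,\ x\sim u\}$, the $\sim$-saturation of $U$, and the inclusion $U\subseteq\varphi_{(\mathcal{C}_{0},X)}^{-1}(\varphi_{(\mathcal{C}_{0},X)}(U))$ always holds. When $U$ is open or closed it is saturated, so the reverse inclusion holds too and $\varphi_{(\mathcal{C}_{0},X)}^{-1}(\varphi_{(\mathcal{C}_{0},X)}(U))=U$, which is the first assertion.

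The openness and closedness of $\varphi_{(\mathcal{C}_{0},X)}$ then follow from the quotient property: a set $V$ in $\mathcal{C}_{0}(X)$ is open (resp. closed) exactly when its preimage is open (resp. closed) in $X$, and for $U$ open (resp. closed) the computation above shows $\varphi_{(\mathcal{C}_{0},X)}^{-1}(\varphi_{(\mathcal{C}_{0},X)}(U))=U$ is open (resp. closed); hence $\varphi_{(\mathcal{C}_{0},X)}(U)$ is open (resp. closed). Finally, the last assertion is just the contrapositive of the description of $\sim$: for distinct points $x,y$ one has $\varphi_{(\mathcal{C}_{0},X)}(x)=\varphi_{(\mathcal{C}_{0},X)}(y)$ iff $x\sim y$ iff every open set contains both or neither of $x,y$, so $\varphi_{(\mathcal{C}_{0},X)}(x)\neq\varphi_{(\mathcal{C}_{0},X)}(y)$ iff some open $U$ contains exactly one of them, i.e. $U\cap\{x,y\}$ is a singleton.

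I expect the only genuine work to be the identification $R_{\mathcal{C}_{0}}={\sim}$ in the first step; once the reflection relation is known to be topological indistinguishability, the saturation of open and closed sets and the three stated consequences are immediate. A minor point to watch is the degenerate case $x=y$ in the last assertion, where the equivalence is to be read for distinct points.
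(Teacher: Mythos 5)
Your proof is correct. Note, however, that the paper gives no proof of Proposition \ref{0152} at all: it simply defers to \cite{ACOSTA}, where the $\mathcal{C}_{0}$-reflection is constructed as the ``$T_{0}$-identification''. Your argument supplies exactly that construction, so in substance it is the standard one, but you make it self-contained within the paper's own framework by routing through Propositions \ref{0131} and \ref{514} to identify $R_{\mathcal{C}_{0}}$ with topological indistinguishability; that identification (both inclusions, via saturation of open sets in one direction and pulling back separating open sets through an arbitrary $T_{0}$ quotient in the other) is indeed the only step requiring real work, and you carry it out correctly. The remaining assertions then follow as you say: saturation of open and closed sets gives $\varphi_{(\mathcal{C}_{0},X)}^{-1}(\varphi_{(\mathcal{C}_{0},X)}(U))=U$, the quotient property converts this into openness and closedness of $\varphi_{(\mathcal{C}_{0},X)}$, and the separation criterion is the contrapositive of the definition of indistinguishability. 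Your caveat about the degenerate case $x=y$ in the last assertion is well taken and reflects an imprecision in the statement rather than a flaw in your argument.
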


The following theorem appears in \cite{HH} for algebraic structures more general, we  give a similar  proof for semigroups.

\begin{theorem} \label{0156}Let $S$ be a semitopological semigroup (monoid) and $\mathcal{C}$ an epireflective class of $TOP$. Then $\mathcal{C}(S)$ is a semitopolgical  semigroup (resp. monoid) and $\varphi_{S}$ is a homomorphism.
\end{theorem}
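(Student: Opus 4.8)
The plan is to transport the multiplication of $S$ to $\mathcal{C}(S)$ by reflecting the shifts one variable at a time, using the universal property of the reflection together with the fact that, being an epimorphism in $TOP$, the map $\varphi_S$ is surjective (recall that in $TOP$ the epimorphisms are exactly the continuous surjections, e.g.\ by a doubled–point argument). Write $\varphi:=\varphi_S$. For each $a\in S$ the shifts $l_a,r_a\colon S\to S$ are continuous, so applying the functor $\mathcal{C}$ produces continuous self–maps $L_a:=\mathcal{C}(l_a)$ and $R_a:=\mathcal{C}(r_a)$ of $\mathcal{C}(S)$, characterized by $L_a(\varphi(x))=\varphi(ax)$ and $R_a(\varphi(x))=\varphi(xa)$ for all $x\in S$.

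The key step, and the one I expect to be the main obstacle, is to show that these reflected shifts descend to a well-defined binary operation, i.e.\ that $R_a$ depends only on the class $\varphi(a)$ and not on the representative $a$. One cannot here argue through a product reflection as in the topological (jointly continuous) case, since the multiplication is merely separately continuous; instead I would fix $x$ and, for any $a,a'$ with $\varphi(a)=\varphi(a')$, read off
$$R_a(\varphi(x))=\varphi(xa)=L_x(\varphi(a))=L_x(\varphi(a'))=\varphi(xa')=R_{a'}(\varphi(x)).$$
Because $\varphi$ is surjective, $R_a$ and $R_{a'}$ then agree on all of $\mathcal{C}(S)$, so $R_a=R_{a'}$. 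This licenses the definition $\bar m\colon \mathcal{C}(S)\times\mathcal{C}(S)\to\mathcal{C}(S)$, $\bar m(\xi,\eta):=R_a(\xi)$ where $a$ is any point with $\varphi(a)=\eta$; well-definedness is exactly the computation above, and by construction $\bar m(\varphi(x),\varphi(y))=\varphi(xy)$, which says precisely that $\varphi$ is a homomorphism.

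It then remains to check that $\bar m$ endows $\mathcal{C}(S)$ with the structure of a semitopological semigroup. Separate continuity falls out of the construction: for fixed $\eta=\varphi(a)$ the map $\xi\mapsto\bar m(\xi,\eta)=R_a(\xi)$ is the continuous map $R_a$, while for fixed $\xi=\varphi(b)$ the map $\eta\mapsto\bar m(\xi,\eta)$ coincides with $L_b$, since $\bar m(\varphi(b),\varphi(a))=\varphi(ba)=L_b(\varphi(a))$ and every $\eta$ is of the form $\varphi(a)$ by surjectivity. Associativity of $\bar m$, and in the monoid case the fact that $\varphi(e)$ is a two-sided identity, follow by evaluating on images $\varphi(x),\varphi(y),\varphi(z)$, transporting the corresponding identities from $S$, and invoking surjectivity once more to conclude that they hold on all of $\mathcal{C}(S)$. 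The only delicate point throughout is the representative-independence of the reflected shifts, for which the surjectivity of the epireflection map is essential.
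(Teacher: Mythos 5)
Your proposal is correct and follows essentially the same route as the paper: apply the functor $\mathcal{C}$ to the shifts $l_a,r_a$ to get continuous self-maps of $\mathcal{C}(S)$ satisfying $\mathcal{C}(l_a)(\varphi_S(x))=\varphi_S(ax)$ and $\mathcal{C}(r_a)(\varphi_S(x))=\varphi_S(xa)$, then use these (together with surjectivity of $\varphi_S$) to see that $\varphi_S(x)\varphi_S(y):=\varphi_S(xy)$ is well defined, separately continuous, and makes $\varphi_S$ a homomorphism. You merely spell out the well-definedness and the transport of associativity and the identity in more detail than the paper does.
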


\begin{proof}
For each $a\in S$, the continuous   mappings $l_{a}$ and $r_{a}$ allow to define continuous mappings $\mathcal{C}(l_{a})$ and $\mathcal{C}(r_{a})$ from $\mathcal{C}(S)$ to itself, for $\mathcal{C}(l_{a})((\varphi_{S}(x))=\varphi_{S}(ax)$ and $\mathcal{C}(r_{a})((\varphi_{S}(x))=\varphi_{S}(xa).$  Therefore the operation on $\mathcal{C}(S)$ defined by $\varphi_{S}(x)\varphi_{S}(y)=\varphi_{S}(xy)$ is well defined and also $\varphi_{S}$ is an homomorphism.
\end{proof}

\begin{prop}\label{035}Let $S$ be a semitopological semigroup (monoid) and let $\sim$ be a congruence in $S$ and let $\pi\colon S\longrightarrow S/\sim$ the respective quotient mapping. Then $S\sim$ is a semitopological semigroup (resp. monoid) and $\pi$ is a  homomorphism. Also if $S$ is a topological semigroup and $\pi\times \pi$ is quotient mapping, then $S/\sim$ is a topological semigroup (resp. monoid). In particular, if $\pi$ is open and $S$ is a topological semigroup, then the same is true for $S/\sim$.
\end{prop}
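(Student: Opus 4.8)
The plan is to separate the algebraic content from the topological content, and within the latter to treat the single shifts (for the semitopological claim) and the joint multiplication (for the topological claim) independently. The purely algebraic assertion---that $S/\sim$ is a semigroup (monoid) and $\pi$ a homomorphism---is precisely Theorem 1 of \cite{GONZO}, already recalled in the preliminaries; the induced operation $\pi(x)\pi(y)=\pi(xy)$ is well defined exactly because $\sim$ is a congruence, and I would take this as the starting point, so that only the continuity statements remain to be established.

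For the semitopological claim I would fix $a\in S$ and consider the left shift $L=l_{\pi(a)}$ on $S/\sim$ given by $L(\pi(x))=\pi(ax)$. The decisive identity is $L\circ\pi=\pi\circ l_{a}$, valid since $\pi(ax)=\pi(a)\pi(x)$. As $l_{a}$ is continuous ($S$ being semitopological) and $\pi$ is continuous, the composite $\pi\circ l_{a}=L\circ\pi$ is continuous; because $\pi$ is a quotient mapping, the universal property of the quotient topology forces $L$ to be continuous. Treating the right shifts symmetrically shows that all shifts on $S/\sim$ are continuous, so $S/\sim$ is a semitopological semigroup (monoid).

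For the topological claim, let $m\colon S/\sim\,\times\,S/\sim\longrightarrow S/\sim$ denote the induced multiplication $m(\pi(x),\pi(y))=\pi(xy)$ and let $m_{S}$ be the multiplication of $S$. Then $m\circ(\pi\times\pi)=\pi\circ m_{S}$, which is continuous since both $m_{S}$ and $\pi$ are. Under the hypothesis that $\pi\times\pi$ is a quotient mapping, the same universal-property argument yields the continuity of $m$, whence $S/\sim$ is a topological semigroup (monoid). Finally, if $\pi$ is open, then $\pi\times\pi$ is open as a product of open mappings, and being a continuous surjection it is automatically a quotient mapping, so the previous step applies and gives the ``in particular'' clause.

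The main obstacle is precisely the passage from $\pi$ to $\pi\times\pi$: the product of two quotient mappings need not be a quotient mapping, so the continuity of the multiplication on $S/\sim$ does \emph{not} follow from $\pi$ being a quotient alone and genuinely requires the extra hypothesis. This is the point at which semigroups diverge from groups---where, as noted in the introduction, the quotient by a congruence is automatically open---and it is why the openness of $\pi$ is singled out as a clean sufficient condition in the last sentence.
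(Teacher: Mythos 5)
Your proposal is correct and follows essentially the same route as the paper: the congruence makes the induced operation well defined, the quotient universal property applied to $L\circ\pi=\pi\circ l_{a}$ (resp.\ $m\circ(\pi\times\pi)=\pi\circ m_{S}$) gives separate (resp.\ joint) continuity, and openness of $\pi$ yields that $\pi\times\pi$ is an open continuous surjection, hence quotient. You merely spell out the universal-property steps and the ``in particular'' clause more explicitly than the paper does.
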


\begin{proof}
Let $S$ be a semitopological semigroup, if $\sim$ is a congruence, obviously the operation defined by $\pi(x)\ast\pi(y)=\pi(xy)$, for each $x,y\in S$ is well defined and associative on $S/\sim$, therefore  $(S/\sim,\ast)$ is a semigroup and $\pi$ is a homomorphism. If $S$ is monoid and $e$ is its neutral element, then $\pi(e)$ is the neutral element in $S/\sim$, therefore $S/\sim$ is a monoid. Since $\pi$ is quotient mapping, we have $\ast$ is separately continuous, and  in consequence we have $S/\sim$ is a semitopological semigroup. If $S$ is a topological semigroup and $\pi\times \pi$ is a quotient mapping, then the continuity of the operation on $S$ implies that $\ast$ is continuous, therefore $S/\sim$ would be a topological semigroup.
\end{proof}

Since if $X$ es $T_{3}$, then $\mathcal{C}_{0}(X)$ is $T_{3}$ (From Proposition \ref{0152} $\varphi_{(\mathcal{C}_{0}, X)}$ is open and closed mapping), the proof of the following proposition is trivial.

\begin{prop}\label{0151}For each topological space $X$, we have $\mathcal{C}_{r}(X)=\mathcal{C}_{0}(\mathcal{C}_{3}(X).$
\end{prop}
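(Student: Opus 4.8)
The plan is to recognize that, by definition of the separation classes, a space is regular exactly when it is both $T_{3}$ and $T_{0}$, so that $\mathcal{C}_{r}=\mathcal{C}_{3}\cap\mathcal{C}_{0}$; I will then show that the composite $\varphi_{(\mathcal{C}_{0},\mathcal{C}_{3}(X))}\circ\varphi_{(\mathcal{C}_{3},X)}$ realizes $\mathcal{C}_{0}(\mathcal{C}_{3}(X))$ as the $\mathcal{C}_{r}$-reflection of $X$. This requires two things: that the target lies in $\mathcal{C}_{r}$, and that the composite satisfies the universal property of the $\mathcal{C}_{r}$-reflection.

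For membership, I would first note that $\mathcal{C}_{3}(X)$ is $T_{3}$ by construction. Applying the $\mathcal{C}_{0}$-reflection, the map $\varphi_{(\mathcal{C}_{0},\mathcal{C}_{3}(X))}$ is open and closed by Proposition \ref{0152}, and an open-closed quotient identifying only topologically indistinguishable points transports the point/closed-set separation of $\mathcal{C}_{3}(X)$ to its image; hence $\mathcal{C}_{0}(\mathcal{C}_{3}(X))$ remains $T_{3}$. Being a $\mathcal{C}_{0}$-reflection it is moreover $T_{0}$, so $\mathcal{C}_{0}(\mathcal{C}_{3}(X))\in\mathcal{C}_{3}\cap\mathcal{C}_{0}=\mathcal{C}_{r}$. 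This is the only step carrying genuine topological content, and it is exactly the preservation property recorded in the remark preceding the statement; it is also where the order of composition is forced, since applying $\mathcal{C}_{3}$ after $\mathcal{C}_{0}$ would give no control over whether $T_{0}$ survives the regularization.

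For universality, I would chain the two reflections. Given a continuous $f\colon X\to Z$ with $Z\in\mathcal{C}_{r}$, the inclusion $\mathcal{C}_{r}\subseteq\mathcal{C}_{3}$ lets $f$ factor uniquely as $g\circ\varphi_{(\mathcal{C}_{3},X)}$, and the inclusion $\mathcal{C}_{r}\subseteq\mathcal{C}_{0}$ then lets $g$ factor uniquely as $h\circ\varphi_{(\mathcal{C}_{0},\mathcal{C}_{3}(X))}$; the resulting $h$ is the unique factorization of $f$ through the composite map. Together with the membership step and the essential uniqueness of reflections, this yields $\mathcal{C}_{r}(X)=\mathcal{C}_{0}(\mathcal{C}_{3}(X))$. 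I do not anticipate a real obstacle: the single substantive point, preservation of the $T_{3}$ separation under the $T_{0}$-identification, is already delivered by the openness and closedness of $\varphi_{(\mathcal{C}_{0},X)}$ in Proposition \ref{0152}, after which only the routine composition of universal properties remains --- which is precisely why the statement is flagged as trivial.
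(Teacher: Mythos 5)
Your proposal is correct and follows the same route as the paper: the paper's entire argument is the remark preceding the statement, namely that $\varphi_{(\mathcal{C}_{0},X)}$ being open and closed (Proposition \ref{0152}) makes $\mathcal{C}_{0}$ preserve the $T_{3}$ property, after which the identity follows by composing the two universal properties. You have simply written out the universality chain that the paper declares trivial.
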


\section{\textbf{RELATED FUNCTORS TO AXIOMS OF SEPARATIONS IN SEMIGROUPS}.}
\begin{prop}\label{0415} Let $S$ be a semitopological monoid where right shifts or left shifts are open, and let $\sim$ an congruence on $S$. Then the respective quotient mapping $\pi \colon S\longrightarrow S/\sim$ is open.
\end{prop}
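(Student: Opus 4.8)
The plan is to unwind the definition of the quotient topology: since $S/\!\sim$ carries the quotient topology, $\pi$ is open precisely when $\pi^{-1}(\pi(U))$ is open in $S$ for every open $U\subseteq S$. Writing the saturation explicitly, $\pi^{-1}(\pi(U))=\{x\in S: x\sim u \text{ for some } u\in U\}$, so it suffices to show that this set is a neighbourhood of each of its points. I will treat the case in which the left shifts are open; the case of open right shifts is entirely symmetric, replacing the left shifts $l_a$ and the left translates $aN$ by the right shifts $r_a$ and the right translates $Na$.

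Fix $x\in\pi^{-1}(\pi(U))$ and choose $u\in U$ with $x\sim u$. The essential point is to manufacture an open neighbourhood of $x$ sitting inside the saturation. First I would pull the open set $U$ back through the (continuous) left shift $l_u$ to obtain $N:=l_u^{-1}(U)$, which is open and, because $l_u(e)=u\in U$, contains the neutral element $e$; thus $uN=l_u(N)\subseteq U$. Next I would push $N$ forward through $l_x$: since the left shifts are open, $xN=l_x(N)$ is open, and it contains $x=xe$ because $e\in N$. Finally, the congruence property finishes the job: for every $n\in N$ we have $x\sim u$ and $n\sim n$, hence $xn\sim un$, and $un\in uN\subseteq U$, so $xn\in\pi^{-1}(\pi(U))$. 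Therefore $xN$ is an open neighbourhood of $x$ contained in $\pi^{-1}(\pi(U))$, and since $x$ was arbitrary the saturation is open.

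I expect the only genuine obstacle to be conceptual rather than technical: in a group one would simply translate $U$ to a neighbourhood of $x$ by the shift $l_{xu^{-1}}$, but here there are no inverses, so one cannot slide $U$ directly onto $x$. The device that circumvents this is to route the neighbourhood through the neutral element $e$ --- contracting $U$ to a neighbourhood $N$ of $e$ by the continuous shift $l_u$, and then expanding $N$ to a neighbourhood of $x$ by the open shift $l_x$ --- which is exactly where the monoid hypothesis and the one-sided openness of the shifts enter. No separation axiom and no hypothesis that $\pi\times\pi$ be quotient is needed, so the argument applies to an arbitrary congruence on a semitopological monoid with one-sided open shifts.
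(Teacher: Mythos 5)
Your argument is correct and is essentially the paper's own proof: both fix $x\sim u$ with $u\in U$, contract $U$ to an open neighbourhood $N$ of $e$ via continuity of $l_u$, expand to the open set $xN$ via openness of $l_x$, and use the congruence to get $xn\sim un\in U$. The only cosmetic difference is that you name $N=l_u^{-1}(U)$ explicitly where the paper just invokes continuity at $e$.
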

\begin{proof}Only we will prove the statement when the left shifts are open, the right case is analogue.
Proving what $\pi$ is open, we will proof what $\pi^{-1}(\pi(U))$ is open in $X$ for each $U$ open in $X$. Indeed let $x$ be in $\pi^{-1}(\pi(U))$, where $U$ is open in $X$. Hence there is $u\in U$, such that $\pi(x)=\pi(u)$. Since $l_{u}(e)=u$, we can find a neighborhood of $e$, $V$, such that $uV\subseteq U$. We will prove that $xV\subseteq \pi^{-1}(\pi(U))$, this would prove that $x\in Int(\pi^{-1}(\pi(U))$, and therefore $\pi^{-1}(\pi(U))$ would be open. Let $t\in xV$, then $t=xv$, where $v\in V$. Since $\sim$ is a congruence, we have that $\pi(t)=\pi(xv)=\pi(uv)\subseteq \pi(U)$, therefore $t=xv\in \pi^{-1}(\pi(U)$, this completes the proof.
\end{proof}

Since the class of the $T_{2}$, $T_{1}$ and $T_{0}$ space are closed for supertopologies, then from  propositions \ref{0131}, \ref{0156}, \ref{035} and \ref{0415}, we have the following corollary.

\begin{coro}\label{0354} If  $S$ is a topological monoid with open shifts, then $\mathcal{C}_{i}(S)$ is a monoid for each $i\in\{0,1,2\}$.
\end{coro}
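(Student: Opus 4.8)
The plan is to assemble the four cited propositions into a single chain, exploiting that each class $\mathcal{C}_i$ with $i\in\{0,1,2\}$ is closed under supertopologies. First I would fix $i\in\{0,1,2\}$ and invoke Proposition \ref{0131}: since $\mathcal{C}_i$ is closed under supertopologies, the reflection morphism $\varphi_{(\mathcal{C}_i,S)}\colon S\longrightarrow \mathcal{C}_i(S)$ is a quotient mapping, so $\mathcal{C}_i(S)$ is (homeomorphic to) $S/R$, where $R$ is the equivalence relation whose classes are the fibers of $\varphi_{(\mathcal{C}_i,S)}$.

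Next I would upgrade $R$ to a congruence. By Theorem \ref{0156}, $\mathcal{C}_i(S)$ carries a semitopological monoid structure for which $\varphi_S:=\varphi_{(\mathcal{C}_i,S)}$ is a homomorphism. Because $\varphi_S$ is a homomorphism, its kernel relation $R$ respects the operation: if $x\,R\,x'$ and $y\,R\,y'$, then $\varphi_S(xy)=\varphi_S(x)\varphi_S(y)=\varphi_S(x')\varphi_S(y')=\varphi_S(x'y')$, whence $xy\,R\,x'y'$. Thus $R$ is a congruence $\sim$, and the quotient $S/\sim$ coincides with $\mathcal{C}_i(S)$ as a semitopological monoid.

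Then I would bring in the open-shifts hypothesis. Since $S$ is a topological monoid with open shifts, in particular its left shifts are open, so Proposition \ref{0415} applies and the quotient mapping $\pi=\varphi_S\colon S\longrightarrow S/\sim$ is open. Finally, $S$ is a topological semigroup and $\pi$ is open, so the last (``in particular'') clause of Proposition \ref{035} yields that $S/\sim=\mathcal{C}_i(S)$ is a topological monoid. Running this argument for each $i\in\{0,1,2\}$ completes the proof.

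The only step requiring genuine care, and hence the main obstacle, is the passage from ``equivalence relation'' to ``congruence'' in the second paragraph: Propositions \ref{035} and \ref{0415} are stated for congruences, so one must first confirm that the reflection's fibers form a congruence rather than an arbitrary partition. This is exactly what the homomorphism property furnished by Theorem \ref{0156} supplies, and once it is in place the remaining implications are immediate applications of the cited results.
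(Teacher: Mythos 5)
Your proposal is correct and follows essentially the same route the paper intends: the paper derives this corollary precisely by combining Propositions \ref{0131}, \ref{0415}, \ref{035} and Theorem \ref{0156}, and your chain (reflection is a quotient because $\mathcal{C}_i$ is closed under supertopologies, the fibers form a congruence because $\varphi_S$ is a homomorphism, the quotient map is open by the open-shifts hypothesis, hence the quotient is a topological monoid) is exactly that argument with the details filled in. Your explicit verification that the kernel relation is a congruence is the one step the paper leaves implicit, and you handle it correctly.
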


\begin{prop}\label{04151}Let $X$ be a topological space and $\sim$ a equivalence relation on $X$. Then, if $X/\sim$ is $T_{2}$, $\sim$ is closed $X\times X$. The reciprocal holds if the quotient mapping $\pi\colon X\longrightarrow X/\sim$ is open.
\end{prop}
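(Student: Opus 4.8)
The plan is to work with the graph $R=\{(x,y)\in X\times X : x\sim y\}$ of the relation and to exploit the standard criterion that a space $Y$ is $T_2$ if and only if its diagonal $\Delta_Y$ is closed in $Y\times Y$. The crucial observation is the set-theoretic identity $R=(\pi\times\pi)^{-1}(\Delta)$, where $\Delta$ denotes the diagonal of $X/\sim$: indeed $(x,y)\in(\pi\times\pi)^{-1}(\Delta)$ means exactly $\pi(x)=\pi(y)$, which is precisely $x\sim y$. With this identity in hand both implications reduce to continuity and openness properties of $\pi\times\pi$ together with the diagonal criterion.

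For the first implication I would suppose $X/\sim$ is $T_2$, so that $\Delta$ is closed in $(X/\sim)\times(X/\sim)$. Since $\pi$ is continuous, so is the product map $\pi\times\pi\colon X\times X\longrightarrow (X/\sim)\times(X/\sim)$, and hence $R=(\pi\times\pi)^{-1}(\Delta)$ is closed in $X\times X$; that is, $\sim$ is closed. This direction uses only continuity, which is automatic for a quotient map.

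For the reciprocal I would assume in addition that $\pi$ is open and that $R$ is closed. First I would record that a product of open maps is open, so $\pi\times\pi$ is an open map. Next I would verify the identity $(\pi\times\pi)(X\times X\setminus R)=\bigl((X/\sim)\times(X/\sim)\bigr)\setminus\Delta$: the inclusion $\subseteq$ holds because $(x,y)\notin R$ forces $\pi(x)\neq\pi(y)$, while $\supseteq$ holds because any pair $(\pi(a),\pi(b))$ off the diagonal arises from some $(a,b)\notin R$. Since $R$ is closed, $X\times X\setminus R$ is open, and applying the open map $\pi\times\pi$ shows that the complement of $\Delta$ is open, i.e.\ $\Delta$ is closed; by the diagonal criterion $X/\sim$ is $T_2$.

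The only delicate point — and precisely the reason the hypothesis on $\pi$ is indispensable in the reciprocal — is that pushing the open set $X\times X\setminus R$ forward requires $\pi\times\pi$ to be open, whereas mere continuity (always available) does not suffice to guarantee that the image of an open set is open. I expect no further obstacle, the remaining ingredients being the elementary diagonal characterization of Hausdorffness and the standard fact that a product of open maps is open.
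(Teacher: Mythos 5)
Your argument is correct. It does, however, take a different route from the paper's: you reduce both implications to the diagonal criterion for Hausdorffness via the identity $R=(\pi\times\pi)^{-1}(\Delta)$, using continuity of $\pi\times\pi$ for the forward direction and openness of $\pi\times\pi$ (a product of open maps being open) together with the image identity $(\pi\times\pi)(X\times X\setminus R)=\bigl((X/\sim)\times(X/\sim)\bigr)\setminus\Delta$ for the converse. The paper instead argues bare-handed with neighborhoods: for the forward direction it pulls back disjoint neighborhoods of $\pi(x)$ and $\pi(y)$ to obtain a basic open box $V_x\times V_y$ missing $\sim$, and for the converse it takes such a box provided by closedness of $\sim$ and pushes it forward, checking directly that $\pi(V_x)$ and $\pi(V_y)$ are disjoint open neighborhoods. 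The two proofs encode the same underlying facts, but yours is more modular, outsourcing the separation bookkeeping to the standard diagonal lemma, while the paper's is self-contained and makes the separating neighborhoods explicit. Both steps of your reduction are justified (surjectivity of $\pi$ is what gives the inclusion $\supseteq$ in your image identity, and it holds since $\pi$ is a quotient map), so there is no gap.
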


\begin{proof}
Let us suppose that $(x,y)\notin \sim$, then $\pi(x)\neq \pi(y)$, if $X/\sim$ is $T_{2}$, there are open disjoint neighborhoods  $U_{\pi(x)}$ and  $U_{\pi(y)}$  of $\pi(x)$ and $\pi(y)$ in $X/\sim$, respectively. The continuity of $\pi$ guarantees that there are neighborhood $V_{x}$, $V_{y}$, of $x$ and $y$ in $X$, respectively,  such that $\pi(V_{x})\subseteq U_{\pi(x)}$ and $\pi(U_{y})\subseteq U_{\pi(y)}$. It follows that $(V_{x}\times V_{y})\cap \sim=\emptyset.$ This proves that $\sim$ is closed in $X\times  X$. Reciprocally, let us suppose that $\pi$ is open and $\sim$ is closed and let us prove $X/\sim$ is $T_{2}$. Indeed let us $\pi(x)\neq \pi(y)$, where $x,y\in X$. Since $\pi(x)\neq \pi(y)$ and $\sim$ is closed, we can find open sets in $X$, $V_{x}$ and $V_{y}$, containing to $x$ and $y$, respectively, such that $(V_{x}\times V_{y})\cap \sim=\emptyset$. It follows that $\pi(V_{x})\cap \pi(V_{x})=\emptyset$. but $\pi(V_{x})$ and $\pi(V_{y})$ are neighborhoods of $\pi(x)$ and $\pi(y)$, respectively, this proves that $X/\sim$ is $T_{2}$.
\end{proof}

The next corollary easily follows from propositions \ref{0415} and \ref{04151}

\begin{coro} \label{04152}Let $S$ be a semitopological monoid with left shift s or right open shifts  and let $\sim$ be a congruence on $S$. Then $S/\sim$ is $T_{2}$ if and only if $\sim$ is a closed congruence in $S$.
\end{coro}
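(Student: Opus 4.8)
The plan is to assemble the biconditional directly from the two preceding propositions, with the openness of the quotient map serving as the bridge. First I would record the trivial but essential observation that a congruence on $S$ is in particular an equivalence relation; hence Proposition \ref{04151} applies to $\sim$ once we forget the algebraic structure and regard $S$ simply as a topological space $X = S$.

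For the forward implication, I would suppose $S/\sim$ is $T_2$. This direction needs no hypothesis whatsoever on the shifts: the first assertion of Proposition \ref{04151} already yields that $\sim$ is closed in $S \times S$, and since $\sim$ is assumed to be a congruence this makes it a closed congruence. So this half is immediate and costs nothing.

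For the reverse implication, I would suppose $\sim$ is a closed congruence, and here is where the monoid structure and the open-shift hypothesis finally enter. Because $S$ is a semitopological monoid in which the left shifts (or the right shifts) are open and $\sim$ is a congruence, Proposition \ref{0415} guarantees that the quotient mapping $\pi \colon S \to S/\sim$ is open. With $\pi$ open and $\sim$ closed, the \emph{reciprocal} clause of Proposition \ref{04151} applies verbatim and delivers that $S/\sim$ is $T_2$. Combining the two implications gives the claimed equivalence.

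The step I expect to carry all the weight is the appeal to Proposition \ref{0415} to secure openness of $\pi$. It is precisely the openness of the shifts---together with the congruence condition, which is what permits transporting a basic neighborhood of the identity from one representative to another---that forces $\pi$ to be open, and this openness is exactly the extra ingredient Proposition \ref{04151} requires in order to upgrade ``$\sim$ closed'' to ``$S/\sim$ Hausdorff.'' Everything else is bookkeeping; the content of the corollary lives entirely in how these two prior results dovetail.
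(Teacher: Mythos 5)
Your proposal is correct and is exactly the argument the paper intends: the corollary is stated to follow from Propositions \ref{0415} and \ref{04151}, with the forward direction coming from the first assertion of Proposition \ref{04151} and the reverse direction from combining the openness of $\pi$ (Proposition \ref{0415}) with the reciprocal clause of Proposition \ref{04151}. No differences worth noting.
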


\begin{theorem}\label{513}
Let $S$ a semitopological monoid with left shifts or right open shifts, then $\mathcal{C}_{2}(S)=S/\sim$, where $\sim$ is the  smallest closed congruence on $S$. 
\end{theorem}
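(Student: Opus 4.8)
The plan is to identify the smallest closed congruence directly as the kernel of the reflection map. Write $\varphi_S \colon S \to \mathcal{C}_2(S)$ for the $\mathcal{C}_2$-reflection and let $\sim$ be the relation defined by $x \sim y \iff \varphi_S(x) = \varphi_S(y)$. I would then establish three facts: that $\mathcal{C}_2(S) = S/\sim$ as semitopological monoids, that $\sim$ is a closed congruence, and that $\sim$ is contained in every closed congruence on $S$. Taken together these say exactly that $\sim$ is the smallest closed congruence (in particular one exists, by construction) and that $\mathcal{C}_2(S) = S/\sim$.

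For the first fact, Theorem \ref{0156} gives that $\varphi_S$ is a homomorphism, so its kernel $\sim$ is automatically a congruence: if $x \sim y$ and $a \sim b$, then $\varphi_S(xa) = \varphi_S(x)\varphi_S(a) = \varphi_S(y)\varphi_S(b) = \varphi_S(yb)$. Since the class $\mathcal{C}_2$ of $T_2$ spaces is closed under super topologies, Proposition \ref{0131} makes $\varphi_S$ a topological quotient map, so $\mathcal{C}_2(S) = S/\sim$ as spaces and hence, by Proposition \ref{035}, as semitopological monoids.

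To see that $\sim$ is closed I would apply the unconditional half of Proposition \ref{04151}: the space $S/\sim = \mathcal{C}_2(S)$ is $T_2$, so $\sim$ is closed in $S \times S$. Notice that this part of the argument uses nothing about open shifts.

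The last fact is where the open-shift hypothesis is essential. Let $\approx$ be any closed congruence on $S$, with quotient homomorphism $q \colon S \to S/\approx$. Because $S$ has open left or right shifts, Corollary \ref{04152} gives that $S/\approx$ is $T_2$, hence $S/\approx \in \mathcal{C}_2$. Thus $q$ is a continuous map from $S$ into an object of $\mathcal{C}_2$, and by the universal property of the $\mathcal{C}_2$-reflection it factors as $q = h \circ \varphi_S$ for a continuous $h$. Consequently $x \sim y$ forces $q(x) = h(\varphi_S(x)) = h(\varphi_S(y)) = q(y)$, i.e. $x \approx y$, so $\sim \subseteq \approx$, and $\sim$ is the smallest closed congruence. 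I expect the only delicate point to be keeping the algebraic and topological descriptions of $\varphi_S$ aligned — checking that one and the same map serves simultaneously as the congruence-kernel quotient and as the topological reflection quotient — and pinpointing where the open shifts enter, namely in Corollary \ref{04152}, which is what guarantees that an arbitrary closed congruence yields a $T_2$ (hence $\mathcal{C}_2$) quotient to which the universal property can be applied.
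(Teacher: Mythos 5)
Your proof is correct and follows essentially the same route as the paper's: both arguments rest on Corollary \ref{04152} (open shifts make quotients by closed congruences $T_2$), Proposition \ref{04151} (for closedness of the kernel), Proposition \ref{0131} and Theorem \ref{0156} (to realize $\mathcal{C}_2(S)$ as a quotient by a congruence), and the universal property of the reflection. The only difference is organizational — you define $\sim$ as the kernel of $\varphi_S$ and prove it is the smallest closed congruence, whereas the paper starts from the smallest closed congruence and exhibits mutually inverse maps; your version has the minor advantage of also establishing that a smallest closed congruence exists, which the paper tacitly assumes.
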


\begin{proof}
Let $\sim$ be the smallest closed congruence on $S$ and  let $\pi\colon S\longrightarrow S$ be the respective quotient mapping.  Corollary \ref{04152} guarantees that $S/\sim$ is $T_{2}$, therefore there is a continuous mapping $g\colon \mathcal{C}_{2}(S)$, such that $g\circ \varphi_{(S,\mathcal{C}_{2})}=\pi$. By Proposition \ref{0157}, Theorem \ref{0156} and given what the $T_{2}$ spaces class is closed under  super topologies,  we have  $\mathcal{C}_{2}(S)=S/\simeq$, being  $\simeq$ a congruence  on $S$, which is closed by  Proposition \ref{04151}. Therefore $\sim\subseteq \simeq$, so we can define a continuous mapping $h\colon S/\sim \longrightarrow S/\simeq$, such that $h\circ \pi=\varphi_{(S,\mathcal{C}_{2})}$. It easily follows that $h$ is the inverse of $g$, this completes the proof.
\end{proof}

\begin{theorem}\label{015}
Let $S$ be a topological semigroup with open shifts, then $S_{sr}$ is a topological semigroup. Also if $S$ is a monoid, then $S$ is a  quasiregular monoid.
\end{theorem}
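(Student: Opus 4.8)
The plan is to treat the two assertions separately, using throughout that the regular open subsets of $S$ form a base of $S_{sr}$, and that, since the shifts are open, the product $UV$ of two open sets is again open (write $UV=\bigcup_{u\in U}l_{u}(V)$, a union of open sets). I shall also use the elementary facts that $Int\overline{A}$ is regular open whenever $A$ is open, and that joint continuity of the multiplication gives $\overline{A}\,\overline{B}\subseteq\overline{AB}$ (apply the operation to $\overline{A}\times\overline{B}=\overline{A\times B}$).

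For the first assertion I would check that $m^{-1}(W)$ is open in $S_{sr}\times S_{sr}$ for every regular open $W$, which suffices since these $W$ form a base of $S_{sr}$. Fix $(a,b)$ with $ab\in W$. By joint continuity in $S$ pick open sets $U_{0}\ni a$ and $V_{0}\ni b$ with $U_{0}V_{0}\subseteq W$, and put $U=Int\overline{U_{0}}$, $V=Int\overline{V_{0}}$; these are regular open and contain $a$ and $b$. Then $UV\subseteq\overline{U_{0}}\,\overline{V_{0}}\subseteq\overline{U_{0}V_{0}}\subseteq\overline{W}$, whereas $UV$ is open by the remark above; hence $UV\subseteq Int\overline{W}=W$. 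Thus $U\times V$ is a basic neighbourhood of $(a,b)$ in $S_{sr}\times S_{sr}$ contained in $m^{-1}(W)$, and so $S_{sr}$ is a topological semigroup.

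For the second assertion the goal is that $S_{sr}$ be regular (this is what ``$T_{3}$'' means here: no separation of points is demanded, since $S$ need not even be $T_{0}$). First I would note that $Cl_{S_{sr}}(A)=\overline{A}$ for every open $A$: the inclusion ``$\supseteq$'' holds because $S_{sr}$ is coarser, and for ``$\subseteq$'' one observes that if $G$ is an open neighbourhood of a point $x\in Cl_{S_{sr}}(A)$ then the regular open set $Int\overline{G}\ni x$ meets $A$, so $A\cap\overline{G}\neq\emptyset$, whence $A\cap G\neq\emptyset$ because $A$ is open. This reduces regularity of $S_{sr}$ to the claim: for every regular open $W$ and every $x\in W$ there is an open $V\ni x$ with $\overline{V}\subseteq W$; indeed $Int\overline{V}$ is then a regular open, hence $S_{sr}$-open, neighbourhood of $x$ with $Cl_{S_{sr}}(Int\overline{V})=\overline{Int\overline{V}}=\overline{V}\subseteq W$.

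The heart of the matter is this last claim, and here the absence of inverses forces a detour: the usual paratopological-group argument, which produces $V$ with $\overline{V}\subseteq VO$ by writing $to^{-1}\in V$, is unavailable. Instead I would use the neutral element. Since $xe=x\in W$, joint continuity gives open $V\ni x$ and $O\ni e$ with $VO\subseteq W$. For $t\in\overline{V}$ and $o\in O$, continuity of the right shift $r_{o}$ yields $to=r_{o}(t)\in\overline{r_{o}(V)}=\overline{Vo}\subseteq\overline{VO}\subseteq\overline{W}$, so $tO\subseteq\overline{W}$. But $tO=l_{t}(O)$ is open (open shifts) and contains $t=te$, whence $t\in Int\overline{W}=W$. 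Therefore $\overline{V}\subseteq W$, $S_{sr}$ is regular, and $S$ is quasiregular. The one genuinely delicate point is exactly this substitute for the inverse-based argument: one pushes the closure forward through the open shift $l_{t}$ and then invokes the regular-openness of $W$ to upgrade membership in $\overline{W}$ into membership in $W$.
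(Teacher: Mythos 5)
Your proof is correct and takes essentially the same route as the paper's: joint continuity of the operation on $S_{sr}$ via $UV\subseteq\overline{U_{0}}\,\overline{V_{0}}\subseteq\overline{U_{0}V_{0}}\subseteq\overline{W}$ together with the openness of $UV$, and quasiregularity by using a neighbourhood of the neutral element plus the open shifts to absorb the closure into the regular open set (the paper multiplies by the identity neighbourhood on the left, you on the right, which is immaterial). Your explicit verification that $Cl_{S_{sr}}(A)=\overline{A}$ for open $A$ fills in a small detail the paper leaves implicit, but the underlying argument is the same.
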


\begin{proof}
Let $S$ be a topological semigroup with open shifts. Let $U$ be  a  regular open in $S$ such that $ab\in U$ being $a,b\in S$. Given that the operation on $S$ is continuous, we can find open sets in $S$, $V$ and $W$, containing $a$ and $b$, respectively, holding $VW\subseteq U$. The continuity of the operation and the fact what $Int(Cl(V))Cl(W)$ is open in $S,$ imply   
\begin{equation}
\begin{split}
Int(\overline{V})Int(\overline{W})\subseteq Int(\overline{V} \overline{W})&=Int(Int (\overline{V}Cl(W)))\\&\subseteq Int(\overline{V}\overline{W})\\&\subseteq Int(\overline{VW})\\&\subseteq Int(\overline{U})=U.
\end{split}
\end{equation} Therefore $S_{sr}$ is a topological semigroup. Let us  suppose  $S$ is a monoid and let us prove that $S_{sr}$ is a topological  monoid $T_{3}$. Obviously $S_{sr}$ is a monoid and  we have already proved $S_{sr}$ is a topological semigroup, it rests to prove that it is $T_{3}$. Indeed let $U$ a regular open in $S$ and $x\in U$. Since the operation in $S$ is continuous and $ex=x$ we can find an open neighborhood of $e$, $V$, and a open neighborhood of $x$, $W$, such that $VW\subseteq U$. Therefore 
\begin{equation}
\begin{split}
x\in\overline{Int(\overline{W})}\subseteq \overline{W}\subseteq V\overline{W}=Int (V\overline{W})&\subseteq Int(\overline{V}\overline{W})\\&\subseteq Int (\overline{VW})\\&\subseteq Int(\overline{U})=U.
\end{split}
\end{equation}This proves that $S_{sr}$ is $T_{3}$.
\end{proof}
From similar argument to the proof of  Theorem 2.6 of \cite{T2} and  Theorem \ref{015} we obtain the following result.

\begin{coro} \label{075} If $S$ is a  topological monoid with open shifts, then $\mathcal{C}_{3}(S)=S_{sr.}$
\end{coro}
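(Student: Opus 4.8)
The plan is to exhibit the semiregularization map, namely the identity-on-points map $\iota\colon S\longrightarrow S_{sr}$ which coarsens the topology of $S$ to the topology generated by the regular open sets, as the $\mathcal{C}_{3}$-reflection of $S$. Since the reflections are essentially unique, it suffices to check three things: that $S_{sr}$ lies in $\mathcal{C}_{3}$, that $\iota$ is a continuous surjection, and that every continuous map from $S$ into a $T_{3}$ space factors (necessarily uniquely) through $\iota$. The first point is exactly where the algebraic hypotheses enter, and it is supplied by Theorem \ref{015}: a topological monoid with open shifts is quasiregular, i.e. $S_{sr}$ is a $T_{3}$ space. Since every regular open subset of $S$ is open in $S$, the topology of $S_{sr}$ is coarser than that of $S$, so $\iota$ is continuous; being the identity on the underlying set, $\iota$ is surjective, hence an epimorphism in $TOP$, consistent with $\mathcal{C}_{3}$ being epireflective.

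The heart of the argument is the universal property, which is purely topological and does not use the monoid structure again. Let $Y$ be a $T_{3}$ space and $f\colon S\longrightarrow Y$ continuous. Because $\iota$ is a bijection on points, the only candidate factorization is the same set map $\bar{f}\colon S_{sr}\longrightarrow Y$, so the task is to prove that $\bar{f}$ is continuous. Fix $x\in S$ and an open set $W$ in $Y$ with $f(x)\in W$. Using regularity of $Y$, choose an open $W'$ with $f(x)\in W'\subseteq \overline{W'}\subseteq W$, and put $U=Int(\overline{f^{-1}(W')})$. Then $U$ is regular open in $S$, hence open in $S_{sr}$, and since $f^{-1}(W')$ is open we have $x\in f^{-1}(W')\subseteq U$.

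The step that demands care is the inclusion $f(U)\subseteq W$. I would obtain it from the closure-image inequality $f(\overline{A})\subseteq \overline{f(A)}$ valid for continuous $f$: since $U\subseteq \overline{f^{-1}(W')}$, one gets $f(U)\subseteq f(\overline{f^{-1}(W')})\subseteq \overline{f(f^{-1}(W'))}\subseteq \overline{W'}\subseteq W$. Thus $U$ is an open neighborhood of $x$ in $S_{sr}$ with $\bar{f}(U)\subseteq W$, proving $\bar{f}$ continuous at $x$; as $x$ is arbitrary, $\bar{f}$ is continuous. It is precisely the regularity of the target $Y$ (the choice of $W'$ with $\overline{W'}\subseteq W$) that makes the regular-open neighborhood $U$ adequate, which is why passing to $S_{sr}$ loses no continuous maps into $T_{3}$ spaces.

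Finally, uniqueness of the factorization is immediate from the surjectivity of $\iota$. Together these verify that $\iota\colon S\longrightarrow S_{sr}$ satisfies the defining universal property of the $\mathcal{C}_{3}$-reflection, so $\varphi_{(\mathcal{C}_{3},S)}$ coincides with $\iota$ and $\mathcal{C}_{3}(S)=S_{sr}$ in the notation of the paper. I expect the main obstacle to be the clean verification of $f(U)\subseteq W$, combining the regular-open choice of $W'$ with the closure-image inequality; everything else reduces to routine comparisons of the two topologies and the essential uniqueness of epireflections.
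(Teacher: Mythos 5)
Your proof is correct and follows essentially the same route the paper intends: the paper does not write the argument out but simply cites Theorem \ref{015} (for $S_{sr}$ being $T_{3}$) together with ``a similar argument to Theorem 2.6 of \cite{T2}'', and that argument is precisely the universal-property verification you give, namely that every continuous map from $S$ into a $T_{3}$ space remains continuous on $S_{sr}$. Your write-up supplies the details (regular openness of $Int(\overline{f^{-1}(W')})$ and the closure-image inequality) that the paper leaves to the reference.
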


From  Proposition \ref{0151},  Theorem \ref{015}, and   Corollary \ref{075}, we have the following theorem.

\begin{theorem}\label{0153}
Let $S$ be a topological monoid with open  shifts, then $\mathcal{C}_{r}(S)=\mathcal{C}_{0}(S_{sr})$.
\end{theorem}

\section{\textbf{EPIREFLECTIONS PRESERVING PRODUCTS}.}

Given an epireflective subcategory of $TOP$, $\mathcal{C}$, we say that the epireflection induced by $\mathcal{C}$ preserves products in a subcategory $\mathcal{D}$ of $TOP$, if $\mathcal{C}(\prod_{i\in} X_{i})=\prod_{i\in I} \mathcal{C}(X_{i})$ for each family $\{X_{i}\}_{i\in I}$ of spaces in $\mathcal{D}$.

\begin{theorem}\label{5141}
Let $\mathcal{C}$ an epireflective subcategory of $TOP$ closed under super topologies satisfying  $\prod_{j\in J} X_{j}\in \mathcal{C}$ if and only if $X_{j}\in \mathcal{C}$ for each $j\in J$. Then the $\mathcal{C}$-epireflection  preserves products in the category of semitopological monoids with open shifts.
\end{theorem}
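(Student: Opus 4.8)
The plan is to exhibit the canonical comparison homomorphism and prove it is an isomorphism by reducing everything to an equality $R_{\mathcal{C}}=R_{\varphi}$ of equivalence relations on the product. Write $S=\prod_{i\in I}S_{i}$ and $P=\prod_{i\in I}\mathcal{C}(S_{i})$. Since each $\mathcal{C}(S_{i})\in\mathcal{C}$, the product hypothesis gives $P\in\mathcal{C}$. By Theorem \ref{0156} each $\varphi_{S_{i}}\colon S_{i}\to\mathcal{C}(S_{i})$ is a homomorphism whose kernel is a congruence, so Proposition \ref{0415} (each $S_{i}$ has open shifts) shows $\varphi_{S_{i}}$ is open; being a reflection it is also a continuous surjection. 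First I would check that the product map $\varphi=\prod_{i}\varphi_{S_{i}}\colon S\to P$ is again open and surjective: on a basic open box $\prod_{i}U_{i}$ (with $U_{i}=S_{i}$ off a finite set) one has $\varphi(\prod_{i}U_{i})=\prod_{i}\varphi_{S_{i}}(U_{i})$, and since $\varphi_{S_{i}}(S_{i})=\mathcal{C}(S_{i})$ this is again a basic open box in $P$. Hence $\varphi$ is an open continuous surjection, in particular a quotient map. Note this step does not require $S$ itself to have open shifts, which in general it need not.

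Next, because $P\in\mathcal{C}$ and $\varphi$ is continuous, the universal property of $\varphi_{S}\colon S\to\mathcal{C}(S)$ produces a unique continuous homomorphism $\Phi\colon\mathcal{C}(S)\to P$ with $\Phi\circ\varphi_{S}=\varphi$ (that $\Phi$ is a homomorphism follows from surjectivity of $\varphi_{S}$ and Theorem \ref{0156}). Since $\varphi=\Phi\circ\varphi_{S}$ is a quotient map and $\varphi_{S}$ is a continuous surjection, $\Phi$ is itself a quotient map. The whole theorem therefore reduces to showing $\Phi$ is injective: a bijective quotient map is a homeomorphism, and a bijective homomorphism that is a homeomorphism is an isomorphism of semitopological monoids, giving $\mathcal{C}(S)\cong P$. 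In the language of Proposition \ref{514}, injectivity of $\Phi$ is exactly the equality $R_{\mathcal{C}}=R_{\varphi}$, where $R_{\varphi}=\{(x,y):\varphi_{S_{i}}(x_{i})=\varphi_{S_{i}}(y_{i})\ \text{for all }i\}$. The inclusion $R_{\mathcal{C}}\subseteq R_{\varphi}$ is immediate since $S/R_{\varphi}=P\in\mathcal{C}$, so the real content is the reverse inclusion $R_{\varphi}\subseteq R_{\mathcal{C}}$.

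To prove $R_{\varphi}\subseteq R_{\mathcal{C}}$, fix $x,y\in S$ with $\varphi_{S_{i}}(x_{i})=\varphi_{S_{i}}(y_{i})$ for every $i$; I must show $\varphi_{S}(x)=\varphi_{S}(y)$. For each coordinate $j$ consider the slice map $\sigma_{j}\colon S_{j}\to S$ sending $s$ to the point equal to $x$ except with $j$-th coordinate $s$; it is continuous, so $\varphi_{S}\circ\sigma_{j}\colon S_{j}\to\mathcal{C}(S)\in\mathcal{C}$ factors through $\varphi_{S_{j}}$ by its universal property and is therefore constant on $\varphi_{S_{j}}$-fibres. Hence replacing a single coordinate of $x$ by the corresponding coordinate of $y$ leaves $\varphi_{S}$ unchanged, and by transitivity the same holds when finitely many coordinates are replaced. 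For a finite $F\subseteq I$ let $z^{(F)}$ be the point agreeing with $y$ on $F$ and with $x$ off $F$; then $\varphi_{S}(z^{(F)})=\varphi_{S}(x)$ for all finite $F$, while the net $(z^{(F)})_{F}$, ordered by inclusion, converges to $y$ in the product topology. Continuity of $\varphi_{S}$ then forces $\varphi_{S}(y)\in\overline{\{\varphi_{S}(x)\}}$, and the symmetric construction gives $\varphi_{S}(x)\in\overline{\{\varphi_{S}(y)\}}$.

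The final step, and the main obstacle, is to pass from this mutual closure relation to genuine equality. This is where I would use that the spaces of $\mathcal{C}$ are $T_{0}$ (which holds for the classes $\mathcal{C}_{0},\mathcal{C}_{1},\mathcal{C}_{2}$ to which the hypothesis ``closed under super topologies'' genuinely applies), since in a $T_{0}$ space two points each lying in the closure of the other must coincide. Thus $\varphi_{S}(x)=\varphi_{S}(y)$, giving $R_{\varphi}\subseteq R_{\mathcal{C}}$ and hence the desired isomorphism $\mathcal{C}(S)\cong P$. The delicate point throughout is precisely the infinite-coordinate case: the single-coordinate factorisation is a direct consequence of the universal property and transitivity settles any finite number of coordinates, but transporting the conclusion across infinitely many coordinates at once cannot be done algebraically and must be handled by the density of the finitely-supported modifications together with the separation hypothesis.
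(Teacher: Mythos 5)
Your proof is correct, and in one important respect it is more complete than the paper's own. Both arguments use the same ingredients --- openness of the quotient maps $\varphi_{(\mathcal{C},S_i)}$ via Propositions \ref{0131}, \ref{0415} and Theorem \ref{0156}, the product hypothesis to place $\prod_{i}\mathcal{C}(S_i)$ in $\mathcal{C}$, and the universal property to manufacture the comparison maps --- but the paper routes the argument through auxiliary quotients $S_i/\sim_i$ defined from the global reflection and simply asserts that the induced map $f\colon\prod_i(S_i/\sim_i)\to\mathcal{C}(\prod_i S_i)$ is a bijection. The well-definedness of that $f$ is exactly the inclusion $R_{\varphi}\subseteq R_{\mathcal{C}}$ that you isolate as the crux, and your slice-map argument (single-coordinate replacement via the universal property of $\varphi_{S_j}$, induction over finite sets of coordinates, then the net of finitely-supported modifications converging to $y$) is precisely the justification the paper omits; for finite index sets the net step is unnecessary and the two proofs coincide in substance. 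The only loose end is your final appeal to $T_0$: the theorem does not hypothesise $\mathcal{C}\subseteq\mathcal{C}_0$, and you only wave at this by restricting to the classes ``to which the hypothesis genuinely applies.'' You can close it cleanly: an epireflective subcategory of $TOP$ is closed under subspaces, so if it contains a non-$T_0$ space it contains the two-point indiscrete space, hence (being closed under products, subspaces and supertopologies) every topological space; in that degenerate case the reflection is the identity and the statement is vacuous, while in every other case $\mathcal{C}\subseteq\mathcal{C}_0$ and your mutual-closure argument legitimately yields $\varphi_{S}(x)=\varphi_{S}(y)$. With that remark added, your proof is a complete and strictly more careful version of the paper's.
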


\begin{proof}
Let $\{S_{i}\}_{i\in I}$ be a family of semitoplogical semigroups with open shifts and let $\mathcal{C}$ an epireflective subcategory of $TOP$ closed under super topologies.  Proposition \ref{0131} and  Theorem \ref{0156} imply $\mathcal{C}(\prod_{i\in I}S_{i})=(\prod_{i\in I} S_{i})/R$, being $R$ a congruence in $\prod_{i\in I} S_{i}$. Given $k\in I$, let us define the following relation in $X_{k}$: $x\sim_{k} y$ if there exist $\textbf{x}, \textbf{y}\in \prod_{i\in I} S_{i}$ such that $\varphi_{(\mathcal{C},\prod_{i\in I} S_{i})}(\textbf{x})=\varphi_{(\mathcal{C},\prod_{i\in I} S_{i})}(\textbf{x})$ and $p_{k}(\textbf{x})=p_{i}(\textbf{y})$, being $p_{k}\colon \prod_{i\in I} S_{i}\longrightarrow S_{k}$, the $k$-th projection. It is easy to prove that $\sim_{k}$ is a congruence on $X_{k}$. Let $\pi_{k}\colon X_{k}\longrightarrow X_{k}/\sim_{k}$ the respective  quotient mapping. From definition of $\sim_{k}$, for each $k\in K$, we have $f\colon \prod_{i\in I}( S_{i}/\sim_{i})\longrightarrow (\prod_{i\in I}S_{i})/R$, given by $f((\pi_{i}(x_{i}))_{i\in I})=\varphi_{(\mathcal{C},\prod_{i\in I} S_{i})}((x_{i})_{i\in I}))$, is a bijection.  Proposition \ref{0415} implies $\pi_{i}$ is open for each $i\in I$, therefore $f$ is a homemorphism, the hypothesis over $\mathcal{C}$ guarantees that $S_{i}/\sim_{i}\in \mathcal{C}$, for each $i\in I$, therefore  for each $i\in I$ we can define a continuous mapping  $g_{i}\colon \mathcal{C}(S_{i})\longrightarrow S_{i}/\sim_{i}$, by $g_{i}(\varphi_{(\mathcal{C},S_{i})}(x))=\pi_{i}(x)$, for each $x\in S_{i}$. Therefore  $\prod_{i\in I} g_{i}\colon \prod_{i\in I} \mathcal{C}(S_{i})\longrightarrow \prod_{i\in I} (S_{i}/\sim_{i})$ is continuous, so that $f\circ \prod_{i\in I} g_{i}\colon \prod_{i\in I} \mathcal{C}(S_{i}))\longrightarrow \mathcal{C}(\prod_{i\in I}S_{i})$ is continuous. On the other hand, since $\prod_{i\in I} \mathcal{C}(S_{i})\in \mathcal{C}$, we can find a continuous mapping $k\colon \mathcal{C}(\prod_{i\in I}S_{i})\longrightarrow \prod_{i\in I}\mathcal{C}(S_{i})$, such that $k\circ \varphi_{(\mathcal{C},\prod_{i\in I} S_{i})}=\prod_{i\in I} \varphi_{(\mathcal{C}, S_{i})}$. It is easy to prove that $k$ and $f\circ \prod_{i\in I} g_{i}$, are inverse to each other. This completes the proof.
\end{proof}

It is known that the $\mathcal{C}_{i}$-reflection does not preserve products (see introduction of \cite{HU}), $i\in \{1,2,3,t\}$. However $\mathcal{C}_{i}$, $i\in\{0,1,2\}$, preserves products in the category of semitopological groups (propositions 3.3, 3.4 and 3.5 of \cite{T3}), while $\mathcal{C}_{3}$ and $\mathcal{C}_{r}$ preserve products in the category of paratopological groups (Proposition 3.6 of \cite{T3}). In the two following results we  give  similar results in the semitopological and topological monoids with open shifts.\\
Given $\mathcal{C}_{i}$, $i\in\{0,1,2\}$, satisfies the hypothesis
of  Theorem \ref{5141}, we have the following corollary.

\begin{coro}\label{5142} The $\mathcal{C}_{i}$-reflection preserves products in the category of semitopological semigroups with open shifts, $i\in\{0,1,2\}$.
\end{coro}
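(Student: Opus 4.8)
The plan is to verify that, for each $i\in\{0,1,2\}$, the class $\mathcal{C}_i$ satisfies the three hypotheses of Theorem \ref{5141}, and then to apply that theorem verbatim to the category of semitopological semigroups with open shifts. Concretely, there are exactly three things to check: that $\mathcal{C}_i$ is an epireflective subcategory of $TOP$, that it is closed under super topologies, and that $\prod_{j\in J}X_j\in\mathcal{C}_i$ if and only if $X_j\in\mathcal{C}_i$ for every $j\in J$. Once these are in place the corollary is immediate.

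The first two hypotheses are already available in the text. Epireflectivity of the classes of $T_0$, $T_1$ and $T_2$ spaces is recorded in the introduction (see \cite{herl}), and the remark preceding Corollary \ref{0354} observes that precisely these three classes are closed under super topologies. Hence only the product characterization needs a genuine argument.

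For the product characterization I would argue in both directions, using the classical facts that the axioms $T_0$, $T_1$ and $T_2$ are simultaneously hereditary and productive. Note first that every factor $S_k$ is nonempty, since by definition a semigroup is a nonempty set; thus, fixing a point in each remaining coordinate realizes $S_k$ as a subspace of $\prod_{j\in J}S_j$. For the necessity direction, if the product lies in $\mathcal{C}_i$ then each factor, being a subspace, inherits the separation property because $T_0$, $T_1$ and $T_2$ pass to subspaces. For the sufficiency direction, given distinct points $\mathbf{x}\neq\mathbf{y}$ of the product they must differ in some coordinate $k$, so $p_k(\mathbf{x})\neq p_k(\mathbf{y})$; applying the separation available in $S_k$ and pulling the resulting open sets back through the continuous projection $p_k$ produces the open sets needed to separate $\mathbf{x}$ and $\mathbf{y}$ in the product. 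This establishes the equivalence $\prod_{j\in J}S_j\in\mathcal{C}_i \iff S_j\in\mathcal{C}_i$ for all $j$.

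With all three hypotheses confirmed, Theorem \ref{5141} applies to each $\mathcal{C}_i$, $i\in\{0,1,2\}$, which yields the stated preservation of products. The only mildly delicate point is the product characterization, and within it the necessity direction, where one must ensure that each factor does embed in the product; here the obstacle dissolves at once because semigroups are nonempty by convention, so no empty-factor pathology can arise. I therefore expect the verification to be routine, with the product equivalence being the step that carries essentially all of the content.
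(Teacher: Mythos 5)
Your proposal is correct and follows exactly the paper's route: the paper derives Corollary \ref{5142} in one line by asserting that $\mathcal{C}_{0}$, $\mathcal{C}_{1}$, $\mathcal{C}_{2}$ satisfy the hypotheses of Theorem \ref{5141}, and you simply make that verification explicit (epireflectivity, closure under super topologies, and the product equivalence via heredity and productivity of $T_{0}$, $T_{1}$, $T_{2}$, with the nonemptiness of semigroups handling the embedding of factors). No discrepancy with the paper's argument beyond your added detail.
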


According to   Lemma 3 of \cite{semi}, we have the semiregularitation respects products, therefore from  Corollary \ref{075},  Theorem \ref{0153} and  Corollary \ref{5142}, we have the following theorem.

\begin{theorem}
The $\mathcal{C}_{i}$-reflection preserves products in the category of topological monoids with open shifts, $i\in\{3, r\}$.
\end{theorem}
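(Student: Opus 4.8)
The plan is to reduce the statement to three ingredients that are already available: the description $\mathcal{C}_3(S)=S_{sr}$ for quasiregular spaces (Corollary \ref{075} together with the mechanism behind it), the factorization $\mathcal{C}_r=\mathcal{C}_0\circ\mathcal{C}_3$ (Proposition \ref{0151}), and the fact that semiregularization respects products (Lemma 3 of \cite{semi}). I would first settle the case $i=3$ and then bootstrap to $i=r$, since $\mathcal{C}_r$ is obtained from $\mathcal{C}_3$ by one further application of $\mathcal{C}_0$.

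For $i=3$, fix a family $\{S_i\}_{i\in I}$ of topological monoids with open shifts. By Corollary \ref{075} each $\mathcal{C}_3(S_i)=(S_i)_{sr}$ is $T_3$, and since arbitrary products of $T_3$ spaces are $T_3$, the product $\prod_{i\in I}(S_i)_{sr}$ is $T_3$. Because semiregularization respects products, $\prod_{i\in I}(S_i)_{sr}=(\prod_{i\in I}S_i)_{sr}$, so $(\prod_{i\in I}S_i)_{sr}$ is $T_3$; that is, $\prod_{i\in I}S_i$ is quasiregular. I would then invoke the same mechanism that proves Corollary \ref{075} (the argument modeled on Theorem 2.6 of \cite{T2}): for a quasiregular space the identity onto its semiregularization is its $\mathcal{C}_3$-reflection. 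Applying this to $X=\prod_{i\in I}S_i$ gives $\mathcal{C}_3(\prod_{i\in I}S_i)=(\prod_{i\in I}S_i)_{sr}=\prod_{i\in I}(S_i)_{sr}=\prod_{i\in I}\mathcal{C}_3(S_i)$, the desired equality. This route deliberately avoids checking that the (possibly infinite) product again has open shifts, a property that can fail because the shifts $l_{a_i}$ need not be surjective.

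For $i=r$, I would write, using Proposition \ref{0151} and the case just proved, $\mathcal{C}_r(\prod_{i\in I}S_i)=\mathcal{C}_0(\mathcal{C}_3(\prod_{i\in I}S_i))=\mathcal{C}_0(\prod_{i\in I}(S_i)_{sr})$, and compare this with $\prod_{i\in I}\mathcal{C}_r(S_i)=\prod_{i\in I}\mathcal{C}_0((S_i)_{sr})$ (Theorem \ref{0153}). Thus it suffices to know that $\mathcal{C}_0$ commutes with the product $\prod_{i\in I}(S_i)_{sr}$, and for this I would appeal to Corollary \ref{5142}, whose proof (Theorem \ref{5141}) operates factorwise and hence requires each factor $(S_i)_{sr}$ to be a semitopological semigroup with open shifts. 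By Theorem \ref{015} each $(S_i)_{sr}$ is a topological semigroup, so the one remaining point is that its shifts are open.

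The main obstacle is precisely this last point: verifying that the shifts of $S$ stay open after passing to the semiregularization $S_{sr}$. I would isolate it as a lemma asserting that an open continuous self-map $f$ of a space $X$ induces an open continuous self-map of $X_{sr}$. Continuity is the easy half, since for $f$ open and continuous the preimage of a regular open set is again regular open, so $f\colon X_{sr}\to X_{sr}$ is continuous. Openness is the delicate half: for a regular open $U$ I must show that $f(U)$ is open in the semiregular topology, i.e.\ a union of regular open sets. For the shift $f=l_a$ I would combine the openness of $l_a$ in $S$ with the continuity of the multiplication on $S_{sr}$ (Theorem \ref{015}) to produce, around each point $ax\in aU$, a regular open neighborhood contained in $aU$. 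The genuine difficulty is that in a monoid which is not cancellative the shifts are neither injective nor surjective, so one cannot merely transport regular open sets along a homeomorphism; this is the step I expect to demand the most care, and it is what ultimately legitimizes the use of Corollary \ref{5142} in the case $i=r$.
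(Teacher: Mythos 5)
Your overall route coincides with the paper's (which is essentially a one-line citation of Corollary \ref{075}, Theorem \ref{0153}, Corollary \ref{5142} and the fact that semiregularization respects products), but you are more careful than the paper about where those results actually apply. Your treatment of the case $i=3$ is complete and correct: showing directly that $(\prod_{i}S_{i})_{sr}=\prod_{i}(S_{i})_{sr}$ is $T_{3}$, hence that the product is quasiregular, and then using the universal-property argument behind Corollary \ref{075} (any continuous map into a $T_{3}$ space is still continuous from the semiregularization, so for a quasiregular space the identity onto its semiregularization is the $\mathcal{C}_{3}$-reflection) neatly sidesteps the question of whether an infinite product of monoids with open shifts again has open shifts — a real issue, since $l_{a_{i}}(S_{i})=a_{i}S_{i}$ can be a proper open subset in infinitely many coordinates, so the box $\prod_{i}a_{i}U_{i}$ need not be open in the Tychonoff topology. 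This is a genuine improvement over the paper's unqualified appeal to Corollary \ref{075} for the product.

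The case $i=r$, however, is left with an unclosed gap that you yourself flag: the lemma that the shifts of $S$ remain open on $S_{sr}$. This is not a routine verification, and I do not see that it is true in the stated generality: for a regular open $U$ the set $aU$ is open in $S$, but there is no evident reason why each point of $aU$ admits a \emph{regular open} neighborhood inside $aU$ (even in the cancellative case one only gets $aU=aS\cap Int_{S}(Cl_{S}(aU))$, an intersection of a regular open set with the open set $aS$, which need not be open in $S_{sr}$ — this is exactly the difficulty that forces the delicate pointwise argument in Lemma \ref{510}). Fortunately the detour through Corollary \ref{5142} is unnecessary here. Proposition \ref{0152} shows that each $\varphi_{(\mathcal{C}_{0},X)}$ is an open surjection whose fibres are the classes of topologically indistinguishable points; since a product of open surjections is an open (hence quotient) surjection, since two points of $\prod_{i}X_{i}$ are indistinguishable iff they are so coordinatewise, and since a product of $T_{0}$ spaces is $T_{0}$, the map $\prod_{i}\varphi_{(\mathcal{C}_{0},X_{i})}$ realizes the $\mathcal{C}_{0}$-reflection of $\prod_{i}X_{i}$ for \emph{arbitrary} spaces $X_{i}$. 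Applying this with $X_{i}=(S_{i})_{sr}$ and combining with your case $i=3$ and Proposition \ref{0151} finishes $i=r$ without any hypothesis on the shifts of $(S_{i})_{sr}$. With that substitution your proof is complete; as written, it rests on an unproven and doubtful lemma.
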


\section{\textbf{THE CELLULARITY OF TOPOLOGICAL MONOIDS} .}

\begin{prop}\label{0154} $c(X)=c(\mathcal{C}_{0}(X))$ for each topological space $X$.
\end{prop}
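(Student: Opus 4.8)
The plan is to exploit the strong properties of the $T_{0}$-reflection map recorded in Proposition \ref{0152}. Write $\varphi=\varphi_{(\mathcal{C}_{0},X)}\colon X\longrightarrow \mathcal{C}_{0}(X)$. This map is continuous, surjective, open and closed, and, most importantly, satisfies $\varphi^{-1}(\varphi(U))=U$ for every open set $U\subseteq X$. Since $c(X)\geqslant\aleph_{0}$ and $c(\mathcal{C}_{0}(X))\geqslant\aleph_{0}$ by the definition of cellularity, it is enough to set up a cardinality-preserving correspondence between the cellular families of $X$ and those of $\mathcal{C}_{0}(X)$; the two suprema will then coincide, and adding $\aleph_{0}$ to each side preserves the equality.

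First I would establish $c(\mathcal{C}_{0}(X))\leqslant c(X)$. Given a cellular family $\{V_{\alpha}\}_{\alpha\in A}$ in $\mathcal{C}_{0}(X)$, I pass to $\{\varphi^{-1}(V_{\alpha})\}_{\alpha\in A}$. Each $\varphi^{-1}(V_{\alpha})$ is open by continuity and non-empty by surjectivity, and the family is pairwise disjoint because preimages of disjoint sets are disjoint. Moreover $\alpha\mapsto\varphi^{-1}(V_{\alpha})$ is injective, since surjectivity of $\varphi$ gives $\varphi(\varphi^{-1}(V_{\alpha}))=V_{\alpha}$, so distinct members of the original family have distinct preimages. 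Hence $\{\varphi^{-1}(V_{\alpha})\}$ is a cellular family in $X$ of the same cardinality, and therefore $|A|\leqslant c(X)$.

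Next I would establish the reverse inequality $c(X)\leqslant c(\mathcal{C}_{0}(X))$. Given a cellular family $\{U_{\alpha}\}_{\alpha\in A}$ in $X$, I pass to $\{\varphi(U_{\alpha})\}_{\alpha\in A}$. Each $\varphi(U_{\alpha})$ is open because $\varphi$ is an open map, and is non-empty. The disjointness is where Proposition \ref{0152} does the decisive work: using $\varphi^{-1}(\varphi(U_{\alpha}))=U_{\alpha}$ one computes $\varphi^{-1}\bigl(\varphi(U_{\alpha})\cap\varphi(U_{\beta})\bigr)=U_{\alpha}\cap U_{\beta}=\emptyset$ for $\alpha\neq\beta$, and since $\varphi$ is onto, a set with empty preimage is itself empty, so $\varphi(U_{\alpha})\cap\varphi(U_{\beta})=\emptyset$. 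The same identity shows $\alpha\mapsto\varphi(U_{\alpha})$ is injective. Thus $\{\varphi(U_{\alpha})\}$ is a cellular family in $\mathcal{C}_{0}(X)$ of cardinality $|A|$, whence $c(X)\leqslant c(\mathcal{C}_{0}(X))$.

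The only delicate point, and the step I would treat with care, is the preservation of pairwise disjointness under the direct image $U\mapsto\varphi(U)$: a general continuous open surjection need not carry disjoint open sets to disjoint sets. Here it succeeds solely because of the identity $\varphi^{-1}(\varphi(U))=U$ from Proposition \ref{0152}, which makes $U\mapsto\varphi(U)$ and $V\mapsto\varphi^{-1}(V)$ mutually inverse, disjointness-preserving bijections between the open sets of $X$ and those of $\mathcal{C}_{0}(X)$. Combining the two inequalities gives $c(X)=c(\mathcal{C}_{0}(X))$, as required.
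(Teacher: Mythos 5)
Your proof is correct and follows essentially the same route as the paper: the inequality $c(\mathcal{C}_{0}(X))\leq c(X)$ comes from pulling cellular families back along the continuous surjection $\varphi_{(\mathcal{C}_{0},X)}$ (the paper simply cites this as the standard fact that cellularity does not increase under continuous images), and the reverse inequality comes from pushing cellular families forward, with disjointness secured by the identity $\varphi^{-1}(\varphi(U))=U$ of Proposition \ref{0152}. Your write-up just makes the disjointness verification more explicit than the paper does.
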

\begin{proof}
Given that $\mathcal{C}_{0}(X)$ is a continuous image of $X$, we have $c(\mathcal{C}_{0}(X))\leq c(X)$. Let $\mathcal{U}$ a cellular family in $X$,  Proposition \ref{0152} guarantees that \\$\varphi_{(\mathcal{C}_{0},X)}^{-1}(\varphi_{(\mathcal{C}_{0},X)}(U))=U$ for each $U\in \mathcal{U}$, therefore $\{\varphi_{(\mathcal{C}_{0}, X)}(U):U\in\mathcal{U}\}$ is a cellular family in $\mathcal{C}_{0}(X)$, so that $c(X)\leq c(\mathcal{C}_{0}(X))$, this completes the proof.
\end{proof}

\begin{prop}\label{0155}$c(X)=c(X_{sr})$ for each topological space $X$.
\end{prop}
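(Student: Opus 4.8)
The plan is to establish the two inequalities $c(X_{sr}) \le c(X)$ and $c(X) \le c(X_{sr})$ separately. The guiding observation throughout is that every regular open set of $X$ is in particular open in $X$, so the topology of $X_{sr}$ is coarser than that of $X$; this makes one inequality immediate and reduces the other to producing, from a cellular family in $X$, a cellular family of the same cardinality made of regular open sets.

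For $c(X_{sr}) \le c(X)$, I would note that each open set of $X_{sr}$ is a union of regular open sets of $X$ and hence open in $X$. Consequently every cellular family in $X_{sr}$ is at the same time a cellular family in $X$, and passing to the supremum over all such families gives $c(X_{sr}) \le c(X)$ directly.

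For the reverse inequality $c(X) \le c(X_{sr})$, I would start from an arbitrary cellular family $\mathcal{U}$ in $X$ and assign to each $U \in \mathcal{U}$ the set $Int(\overline{U})$. Three facts are needed. First, $Int(\overline{U})$ is regular open, hence open in $X_{sr}$, and it contains $U$, so it is nonempty. Second --- and this is the crux --- if $U,V \in \mathcal{U}$ are disjoint, then $Int(\overline{U}) \cap Int(\overline{V}) = \emptyset$. Third, the assignment $U \mapsto Int(\overline{U})$ is injective on $\mathcal{U}$; this follows from the first two facts, since distinct members of $\mathcal{U}$ are disjoint while their images are nonempty. Granting these, $\{Int(\overline{U}) : U \in \mathcal{U}\}$ is a cellular family in $X_{sr}$ of the same cardinality as $\mathcal{U}$, and taking suprema yields $c(X) \le c(X_{sr})$.

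The main obstacle is the disjointness claim, which I would prove by contradiction. Suppose $W := Int(\overline{U}) \cap Int(\overline{V})$ is a nonempty open set. Since $W$ is a nonempty open subset of $\overline{U}$, it must meet $U$; then $W \cap U$ is a nonempty open subset of $\overline{V}$, so it must meet $V$, producing a point of $U \cap V$ and contradicting $U \cap V = \emptyset$. This argument uses only the elementary fact that a nonempty open set contained in $\overline{A}$ necessarily intersects $A$. Combining the two inequalities then gives $c(X) = c(X_{sr})$, as required.
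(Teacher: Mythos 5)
Your proof is correct and follows essentially the same route as the paper: the coarseness of the semiregularization topology gives $c(X_{sr})\le c(X)$, and the assignment $U\mapsto Int(\overline{U})$ transports a cellular family of $X$ into one of $X_{sr}$. The only difference is that you supply the verification of the disjointness of $Int(\overline{U})$ and $Int(\overline{V})$, which the paper merely asserts.
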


\begin{proof}
Given that $X_{sr}$ is a continuous image of $X$, we have $c(X_{sr})\leq X$. Let $\mathcal{U}$ a cellular family in $X$ and let $U$ and $V$ be in $\mathcal{U}$, therefore $U\cap V=\emptyset$. Since $U$ and $V$ are open sets in $X$, we have $int \overline{U}
\cap int\overline{V}=\emptyset$. Also since for all $U\in \mathcal{U}$ it holds $U\subseteq int\overline{U}$, we have $int\overline{U}\neq \emptyset$ for all $U\in U$. This proves that the $\{int\overline{U}: U\in \mathcal{U}\}$ is a cellular family in $X_{sr}$, therefore $c(X)\leq c(X_{sr})$,  this completes the proof. \end{proof}

From  Propositions \ref{0154} and \ref{0155} and the Theorem \ref{0153} we have the following corollary.

\begin{coro}\label{0157}Let $X$ be a quasiregular space, then $c(X)=c(\mathcal{C}_{r}(X)).$
\end{coro}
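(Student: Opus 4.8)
The statement to prove is Corollary~\ref{0157}: for a quasiregular space $X$, we have $c(X)=c(\mathcal{C}_{r}(X))$.

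\medskip

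The plan is to chain together the three preceding results exactly as the parenthetical remark before the corollary suggests. First I would recall that by definition a quasiregular space $X$ is one for which $X_{sr}$ is a $T_{3}$ space. This is the crucial hypothesis, because it lets me compute the regular reflection of $X$ concretely. Indeed, I would argue that for a quasiregular space $X$ the regular reflection is realized by $\mathcal{C}_{r}(X)=\mathcal{C}_{0}(X_{sr})$: since $X_{sr}$ is already $T_{3}$ and $T_{3}$ is preserved under passing to $\mathcal{C}_{0}$ (the remark before Proposition~\ref{0151} notes that if $X$ is $T_{3}$ then $\mathcal{C}_{0}(X)$ is $T_{3}$, because $\varphi_{(\mathcal{C}_{0},X)}$ is open and closed), the space $\mathcal{C}_{0}(X_{sr})$ is both $T_{0}$ and $T_{3}$, hence regular, and one checks it satisfies the universal property of the regular reflection of $X$. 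This mirrors the identity $\mathcal{C}_{r}(X)=\mathcal{C}_{0}(\mathcal{C}_{3}(X))$ of Proposition~\ref{0151}, with the role of $\mathcal{C}_{3}(X)$ played by $X_{sr}$ in the quasiregular setting.

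\medskip

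Next I would simply run the cellularity through each stage. By Proposition~\ref{0155} we have $c(X)=c(X_{sr})$, so passing to the semiregularization leaves cellularity unchanged. Then by Proposition~\ref{0154}, applied to the space $X_{sr}$ in place of $X$, we have $c(X_{sr})=c(\mathcal{C}_{0}(X_{sr}))$, so the $T_{0}$-identification also preserves cellularity. Stringing these together gives
\begin{equation}
c(X)=c(X_{sr})=c(\mathcal{C}_{0}(X_{sr}))=c(\mathcal{C}_{r}(X)),
\end{equation}
where the final equality uses the identification $\mathcal{C}_{r}(X)=\mathcal{C}_{0}(X_{sr})$ established in the first step.

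\medskip

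The main obstacle, such as it is, lies entirely in the first paragraph: justifying that $\mathcal{C}_{r}(X)=\mathcal{C}_{0}(X_{sr})$ for quasiregular $X$, rather than merely $\mathcal{C}_{r}(X)=\mathcal{C}_{0}(\mathcal{C}_{3}(X))$. The difference is that I must know the semiregularization $X_{sr}$ actually computes the $T_{3}$-reflection $\mathcal{C}_{3}(X)$ on cellularity-relevant terms. For an arbitrary space the semiregularization need not be the $T_{3}$-reflection, but the quasiregularity hypothesis guarantees $X_{sr}$ is $T_{3}$, and the two propositions on cellularity then make the precise reflection irrelevant: what the argument really needs is that both $X\to X_{sr}$ and the $T_{0}$-identification preserve $c(\cdot)$, which are exactly Propositions~\ref{0155} and~\ref{0154}. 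Once the quasiregularity is used to pin down $\mathcal{C}_{r}(X)$ as $\mathcal{C}_{0}(X_{sr})$ via Proposition~\ref{0151} and Theorem~\ref{0153}, the rest is the routine three-step computation above.
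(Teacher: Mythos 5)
Your proof follows the same route as the paper: chaining Proposition~\ref{0155} ($c(X)=c(X_{sr})$) with Proposition~\ref{0154} applied to $X_{sr}$, and identifying $\mathcal{C}_{r}(X)$ with $\mathcal{C}_{0}(X_{sr})$. If anything you are slightly more careful than the paper, which justifies this identification by citing Theorem~\ref{0153} (stated only for topological monoids with open shifts), whereas you derive it directly from the quasiregularity of $X$ via Proposition~\ref{0151} and the universal property of the reflection.
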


\begin{coro}\label{0158} If $X$ is quasiregular space, we have $c(X)=c(\mathcal{C}_{i}(X))$ for each $i\in \{0, 1, 2,3, r\}$.
\end{coro}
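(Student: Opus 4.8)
The plan is to derive all five equalities from a single squeeze argument, combining the elementary fact that cellularity never increases under a continuous surjection with the two endpoint identities that are already at our disposal.

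First I would record the monotonicity principle used implicitly in the proofs of Propositions \ref{0154} and \ref{0155}: if $f\colon A\longrightarrow B$ is a continuous surjection and $\mathcal{V}$ is a cellular family in $B$, then $\{f^{-1}(V):V\in\mathcal{V}\}$ is a family of nonempty, open, pairwise disjoint subsets of $A$, so $c(B)\leq c(A)$. Since every reflection map $\varphi_{(\mathcal{C}_i,X)}$ is an epimorphism in $TOP$, hence surjective, this already gives $c(\mathcal{C}_i(X))\leq c(X)$ for each $i\in\{0,1,2,3,r\}$.

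Next I would assemble a factorization chain. Because $\mathcal{C}_3\subseteq\mathcal{C}_2\subseteq\mathcal{C}_1\subseteq\mathcal{C}_0$ (finer separation axioms determine smaller classes), each target $\mathcal{C}_{i+1}(X)$ lies in $\mathcal{C}_i$, so the universal property of the $\mathcal{C}_i$-reflection factors $\varphi_{(\mathcal{C}_{i+1},X)}$ through $\varphi_{(\mathcal{C}_i,X)}$, producing continuous maps
\[ \mathcal{C}_0(X)\longrightarrow \mathcal{C}_1(X)\longrightarrow \mathcal{C}_2(X)\longrightarrow \mathcal{C}_3(X). \]
Each of these is surjective, since its composite with $\varphi_{(\mathcal{C}_0,X)}$ equals the surjection $\varphi_{(\mathcal{C}_{i+1},X)}$. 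Applying the monotonicity principle along the chain gives
\[ c(\mathcal{C}_0(X))\geq c(\mathcal{C}_1(X))\geq c(\mathcal{C}_2(X))\geq c(\mathcal{C}_3(X)). \]

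Finally I would pin down the two ends. Proposition \ref{0154} gives $c(X)=c(\mathcal{C}_0(X))$ at the top. For the bottom, Proposition \ref{0151} yields $\mathcal{C}_r(X)=\mathcal{C}_0(\mathcal{C}_3(X))$, so Proposition \ref{0154} applied to $\mathcal{C}_3(X)$ gives $c(\mathcal{C}_r(X))=c(\mathcal{C}_3(X))$; and Corollary \ref{0157}, which applies precisely because $X$ is quasiregular, gives $c(X)=c(\mathcal{C}_r(X))$. Hence $c(\mathcal{C}_3(X))=c(X)$, and the displayed chain is squeezed into the equalities $c(X)=c(\mathcal{C}_0(X))=c(\mathcal{C}_1(X))=c(\mathcal{C}_2(X))=c(\mathcal{C}_3(X))$, while $c(\mathcal{C}_r(X))=c(\mathcal{C}_3(X))=c(X)$ handles the last index. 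The only point requiring care is getting the direction of each factorization right and confirming surjectivity of the chain maps; once that is settled, the monotonicity together with the two endpoint identities forces every intermediate cellularity to coincide with $c(X)$.
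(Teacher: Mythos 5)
Your overall strategy --- squeeze every $c(\mathcal{C}_i(X))$ between $c(X)$ from above (continuous surjectivity of the reflection maps) and $c(X)$ from below (via the endpoint identities of Propositions \ref{0154} and \ref{0151} and Corollary \ref{0157}) --- is exactly the paper's argument, and most of your links are sound. But one link is wrong: the claimed inclusion $\mathcal{C}_3\subseteq\mathcal{C}_2$, and hence the factorization $\mathcal{C}_2(X)\longrightarrow\mathcal{C}_3(X)$ at the end of your chain. In this paper (following the conventions of \cite{T2}), $T_3$ is the pure regularity axiom \emph{without} any $T_0$ assumption, and ``regular'' means $T_3$ plus $T_0$; this is forced by Proposition \ref{0151}, $\mathcal{C}_r(X)=\mathcal{C}_0(\mathcal{C}_3(X))$, which would be vacuous if $\mathcal{C}_3(X)$ were already $T_0$, and by Corollary \ref{075}, which identifies $\mathcal{C}_3(S)$ with the semiregularization $S_{sr}$ --- a space that need not even be $T_0$. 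An indiscrete space with two points is $T_3$ in this sense but not $T_2$, so $\mathcal{C}_3\not\subseteq\mathcal{C}_2$ and the universal property gives you no map $\mathcal{C}_2(X)\longrightarrow\mathcal{C}_3(X)$. Consequently your inequality $c(\mathcal{C}_2(X))\geq c(\mathcal{C}_3(X))$, which is the only lower bound you offer for $c(\mathcal{C}_1(X))$ and $c(\mathcal{C}_2(X))$, is unjustified.

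The repair is small and is precisely what the paper does: anchor the bottom of the chain at $\mathcal{C}_r$ rather than $\mathcal{C}_3$. Since regular ($=T_3+T_0$) implies Hausdorff, one has $\mathcal{C}_r\subseteq\mathcal{C}_2\subseteq\mathcal{C}_1\subseteq\mathcal{C}_0$, giving continuous surjections $\mathcal{C}_0(X)\to\mathcal{C}_1(X)\to\mathcal{C}_2(X)\to\mathcal{C}_r(X)$ and hence $c(\mathcal{C}_r(X))\leq c(\mathcal{C}_2(X))\leq c(\mathcal{C}_1(X))\leq c(\mathcal{C}_0(X))=c(X)$; Corollary \ref{0157} then collapses this chain. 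Your treatment of the index $i=3$ is already correct and independent of the faulty link: $c(\mathcal{C}_3(X))=c(\mathcal{C}_0(\mathcal{C}_3(X)))=c(\mathcal{C}_r(X))=c(X)$ by Propositions \ref{0154} and \ref{0151} and Corollary \ref{0157}. With that one substitution your proof coincides with the paper's.
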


\begin{proof}Let $X$ be a quasiregular espace. By Proposition \ref{0154} and the definition of cellularity it is clear what $c(\mathcal{C}_{r}(X))\leq c(\mathcal{C}_{3}(X))$ and $c(\mathcal{C}_{r}(X))\leq c(\mathcal{C}_{2}(X))\leq c(\mathcal{C}_{1}(X))\leq c(\mathcal{C}_{0}(X))=c(X)$.  Corollary \ref{0157} guarantees that $c(X)=c(\mathcal{C}_{r}(X))$, this completes the proof.
\end{proof}

From  Theorem \ref{015} and the Corollary \ref{0158} we have the following Theorem.

\begin{theorem}\label{0751}Let $S$ be a topological monoid with open shifts. Then the following statements are equivalent,

\begin{itemize}
\item[i)]$S$ has cellularity countable.
\item[ii)]$\mathcal{C}_{0}(S)$ has cellularity countable.
\item[iii)]$\mathcal{C}_{1}(S)$ has cellularity countable.
\item[iv)]$\mathcal{C}_{2}(S)$ has cellularity countable.
\item[v)]$\mathcal{C}_{3}(S)$ has cellularity countable.
\item[vi)]$\mathcal{C}_{r}(S)$ has cellularity countable.
\end{itemize}
\end{theorem}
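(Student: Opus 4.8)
The final statement is Theorem \ref{0751}, which asserts the equivalence of six cellularity conditions for a topological monoid $S$ with open shifts. The plan is to reduce everything to a single anchor equality and then propagate it through the chain of reflections using the results already assembled in this section.

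First I would invoke Theorem \ref{015}, which tells us that a topological monoid with open shifts is a quasiregular space (since $S_{sr}$ is $T_3$). This is the crucial hypothesis-verification step: once $S$ is known to be quasiregular, the entire machinery of Corollary \ref{0158} becomes available. Indeed, Corollary \ref{0158} states precisely that for a quasiregular space $X$ one has $c(X)=c(\mathcal{C}_i(X))$ for each $i\in\{0,1,2,3,r\}$. So the theorem follows almost immediately: applying Corollary \ref{0158} with $X=S$ yields
\[
c(S)=c(\mathcal{C}_0(S))=c(\mathcal{C}_1(S))=c(\mathcal{C}_2(S))=c(\mathcal{C}_3(S))=c(\mathcal{C}_r(S)).
\]
Since all six cardinals coincide, one of them equals $\aleph_0$ if and only if every one of them does; recalling that countable cellularity means the cellularity equals $\aleph_0$, this is exactly the asserted equivalence of statements i)--vi).

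The only genuine content of the argument is therefore the bridge from the hypothesis ``$S$ is a topological monoid with open shifts'' to the hypothesis ``$S$ is quasiregular'' required by Corollary \ref{0158}; everything downstream is a restatement of an already-proved equality of cardinal invariants. I expect this hypothesis check to be the one place where the specific algebraic-topological structure of $S$ is used — and it is supplied wholesale by Theorem \ref{015}, so no new estimate or construction is needed. The remaining step, turning an equality of cellularities into an equivalence of countability statements, is purely formal and requires only the observation that two spaces with equal cellularity are simultaneously Souslin or simultaneously not. I would write the proof as: by Theorem \ref{015}, $S$ is quasiregular; hence by Corollary \ref{0158} the six cellularities are all equal; therefore the six countability conditions are pairwise equivalent, completing the proof.
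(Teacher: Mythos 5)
Your proposal matches the paper's own argument exactly: the paper derives Theorem \ref{0751} precisely by combining Theorem \ref{015} (which gives quasiregularity of a topological monoid with open shifts) with Corollary \ref{0158} (equality of the cellularities $c(\mathcal{C}_i(X))$ for quasiregular $X$). Your additional remark that equal cellularities yield equivalent countability statements is the same purely formal final step the paper leaves implicit, so there is nothing to correct.
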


\begin{lemma}\label{510}If $S$ is  a  cancellative topological semigroup with open shifts, then  $\mathcal{C}_{0}(S_{sr})$ is cancellative. 
\end{lemma}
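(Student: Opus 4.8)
The plan is to describe $\mathcal{C}_{0}(S_{sr})$ concretely and then verify cancellation element by element, reducing each instance to a separation by a regular open set of $S$. First, by Theorem \ref{015} the space $S_{sr}$ is a topological semigroup, so by Theorem \ref{0156} the set $\mathcal{C}_{0}(S_{sr})$ carries a semigroup operation for which $q:=\varphi_{(\mathcal{C}_{0},S_{sr})}$ is a homomorphism, i.e. $q(x)q(y)=q(xy)$. Since the regular open sets of $S$ form a base of $S_{sr}$, Proposition \ref{0152} applied to $S_{sr}$ gives the working criterion I would use throughout: $q(x)\neq q(y)$ if and only if there is a regular open set $W$ of $S$ with $W\cap\{x,y\}$ a singleton. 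Cancellativity of $\mathcal{C}_{0}(S_{sr})$ then amounts to the implications $q(ab)=q(ac)\Rightarrow q(b)=q(c)$ and $q(ba)=q(ca)\Rightarrow q(b)=q(c)$ for all $a,b,c\in S$; I would prove each by contraposition and treat only the left case, the right one being symmetric via $r_{a}$.

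The key structural observation is that each shift is a homeomorphism onto an open subspace. Since $S$ is cancellative with open shifts, for a fixed $a\in S$ the map $l_{a}\colon S\to S$ is continuous, open and injective, hence a homeomorphism of $S$ onto $aS=l_{a}(S)$; moreover $aS$ is open in $S$ because $l_{a}$ is open. Consequently $l_{a}$ carries regular open sets of $S$ to regular open sets of the subspace $aS$.

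Now suppose $q(b)\neq q(c)$ and choose, without loss of generality, a regular open $V$ of $S$ with $b\in V$ and $c\notin V$. Then $aV$ is regular open in $aS$, and since $aS$ is open in $S$ the standard interior--closure identities for an open subspace (namely $Cl_{aS}(aV)=\overline{aV}\cap aS$, $Int_{aS}(B)=Int_{S}(B)$ for $B\subseteq aS$, and $Int_{S}(A\cap aS)=Int_{S}(A)\cap aS$) give $Int_{S}(\overline{aV})\cap aS=aV$. Since $ac\in aS$, while $ac\in aV$ would force $c\in V$ by left cancellation, we get $ac\notin aV$, hence $ac\notin Int_{S}(\overline{aV})$; on the other hand $ab\in aV\subseteq Int_{S}(\overline{aV})$. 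Thus $U:=Int_{S}(\overline{aV})$ is a regular open set of $S$ with $U\cap\{ab,ac\}=\{ab\}$, so $q(ab)\neq q(ac)$, as required. The one delicate point is exactly this last step: open shifts alone only show that $aV$ is open and separates $ab$ from $ac$, whereas the criterion coming from Proposition \ref{0152} needs a genuine \emph{regular} open separator; passing to $Int_{S}(\overline{aV})$ and controlling it on $aS$ through the open-subspace identity is what repairs this, and it is there that cancellation (used to keep $ac$ out of $aV$) does the real work.
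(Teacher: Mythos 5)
Your proof is correct and follows essentially the same route as the paper's: separate $b$ from $c$ by a regular open set via Proposition \ref{0152}, transport it through the homeomorphism $l_{a}\colon S\to aS$ onto the open subspace $aS$, and extend the resulting regular open subset of $aS$ to a regular open subset of $S$ that still separates $ab$ from $ac$ thanks to cancellation. Your extension step (taking $Int_{S}(\overline{aV})$ and checking $Int_{S}(\overline{aV})\cap aS=aV$) is in fact a slightly cleaner execution of the same idea the paper carries out with an auxiliary open set $V$ satisfying $cU=V\cap cS$.
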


\begin{proof}Let $S$ be a cancellative topological semigroup with open shifts and let us see what $\mathcal{C}_{0}(S_{sr})$ is cancellative. Indeed, let us suppose $\varphi_{(\mathcal{C}_{0},S_{sr})}(cx)=\varphi_{(\mathcal{C}_{0},S_{sr})}(cy)$ but $\varphi_{(\mathcal{C}_{0},S_{sr})}(x)\neq \varphi_{(\mathcal{C}_{0},S_{sr})}(y)$. From  Proposition \ref{0152} we can find an open regular in $S_{sr}$, $U$, that without loss of generality we can assume that $x\in U$ and $y\notin U$. Since $l_{c}\colon S\longrightarrow cS$ is an homemorphism, we have $cU$ is open regular in $cS$. There exist an open set $V$,  in $S$ such that $cU=V\cap cS$. Given that $cS$ is open in $S$, we have $ cx\in cU=int_{cS}(Cl_{cS}(cU))=cS \cap int_{S}( Cl_{S}(V))$. Since $S$ is cancellative, $cy\notin cU$ and  and in consequence $cy\notin int_{S}(Cl_{S}(V)$, but $int_{S}(Cl_{S}(V)$ is open in $S_{sr}$, from Proposition \ref{0151} we can say $\varphi_{(\mathcal{C}_{0},S_{sr})}(cx)\neq \varphi_{(\mathcal{C}_{0},S_{sr})}(cy)$ obtaining a contradiction, this implies that $\varphi_{(\mathcal{C}_{0},S_{sr})}(x)\neq \varphi_{(\mathcal{C}_{0},S_{sr})}(y)$ and therefore $\mathcal{C}_{0}(S_{sr})$ is cancellative to the  left, proving that is cancellative to the right is analogues.
\end{proof}

In \cite{T3} M. Tkachenko proved that the $\sigma$-compact paratopological groups have countable cellularity. In the following two  theorems we give   analogues results for topological semigroups, but given that we do not have group operation, we have changed the $\sigma$-compactness for compactness in the first theorem, and in the second theorem, in addition to the $\sigma$-compactness we have added the sequential  compactness.

\begin{theorem}\label{5132}Let $S$ be a compact topological monoid cancelative with open shifts, then $S$ has countable cellularity.
\end{theorem}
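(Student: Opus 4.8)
The plan is to reduce the assertion to the regular reflection $\mathcal{C}_{r}(S)$ and then identify the latter with a compact topological group, for which the Souslin property is classical. Since $S$ is a topological monoid with open shifts, Theorem \ref{015} shows that $S$ is quasiregular; hence, by the equivalence i)$\Leftrightarrow$vi) of Theorem \ref{0751}, it suffices to prove that $\mathcal{C}_{r}(S)$ has countable cellularity. By Theorem \ref{0153} we have $\mathcal{C}_{r}(S)=\mathcal{C}_{0}(S_{sr})$, so setting $G=\mathcal{C}_{0}(S_{sr})$ the whole problem reduces to showing $c(G)=\aleph_{0}$.

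Next I would record the structure carried by $G$. By Theorem \ref{015} the semiregularization $S_{sr}$ is a topological monoid, and since its topology is coarser than that of $S$ it is compact; consequently $G$, being a continuous quotient image of $S_{sr}$, is compact as well. The $T_{0}$-identification $\varphi_{(\mathcal{C}_{0},S_{sr})}$ is open by Proposition \ref{0152} and is a homomorphism by Theorem \ref{0156}, so the inducing relation is an open quotient congruence; its square $\varphi_{(\mathcal{C}_{0},S_{sr})}\times\varphi_{(\mathcal{C}_{0},S_{sr})}$ is again open, hence quotient, and Proposition \ref{035} then guarantees that the multiplication on $G$ is jointly continuous, i.e. $G$ is a topological monoid. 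Moreover $G=\mathcal{C}_{r}(S)$ is $T_{0}$ (Proposition \ref{0151}) and regular, whence Hausdorff, and it is cancellative by Lemma \ref{510}. Thus $G$ is a compact Hausdorff cancellative topological monoid.

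The decisive step is now an appeal to the classical theorem of Numakura: a cancellative topological semigroup whose underlying space is compact and Hausdorff is automatically a topological group. Applying it to $G$ makes $G$ a compact Hausdorff topological group, and compact topological groups are well known to have the Souslin property (for example, a compact group is $\sigma$-compact and carries a Haar measure, so it cannot contain an uncountable cellular family; this is already covered by the $\sigma$-compact case recalled in the introduction). Hence $c(G)=\aleph_{0}$, and therefore $c(S)=c(\mathcal{C}_{r}(S))=c(G)=\aleph_{0}$. The routine points --- compactness of $S_{sr}$, the fact that a product of open quotient maps is quotient, and that regularity together with $T_{0}$ yields Hausdorffness --- are immediate; the genuine obstacle is precisely the semigroup-theoretic input that turns a compact cancellative topological monoid into a group, after which the known cellularity of compact groups finishes the argument.
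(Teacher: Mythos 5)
Your proof is correct and follows essentially the same route as the paper: both reduce to $\mathcal{C}_{r}(S)=\mathcal{C}_{0}(S_{sr})$, show it is a compact Hausdorff cancellative topological monoid, invoke the Numakura-type theorem (the paper cites Theorem 2.5.2 of Arhangelskii--Tkachenko) to get a compact topological group, use the known countable cellularity of such groups, and transfer back via Theorem \ref{0751}. Your write-up simply makes explicit several routine verifications (compactness of $S_{sr}$, joint continuity on the quotient, Hausdorffness from $T_{0}$ plus regularity) that the paper leaves implicit.
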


\begin{proof}
Let $S$ be a compact topological monoid cancelative with open shifts, from  Proposition \ref{0151},  Corollary \ref{0354},  Theorem \ref{015} and Lemma \ref{510}, we have $\mathcal{C}_{r}(S)=\mathcal{C}_{0}(S_{sr})$ is a cancellative topological monoid which is compact. Since $S$ is compact,   Theorem 2.5.2 of \cite{T1} implies that $\mathcal{C}_{r}(S)$ is a compact topological group and from Corollary 2.3 of \cite{T3} we have $\mathcal{C}_{r}(S)$ has countable cellularity. Finally, applying   Theorem \ref{0751} we have $S$ has countable cellula\-rity.
\end{proof}

\begin{theorem}\label{5131}If $S$ is a $\sigma$-compact and sequentially compact cancellative topological  monoid  with open shifts, then $S$ has countable celullarity. 
\end{theorem}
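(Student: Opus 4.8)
The plan is to mirror the structure of the proof of Theorem~\ref{5132}, replacing the appeal to the compact-semigroup-is-group result with an argument that exploits $\sigma$-compactness together with sequential compactness. First I would pass from $S$ to its regular reflection. By Proposition~\ref{0151}, Corollary~\ref{0354}, Theorem~\ref{015} and Lemma~\ref{510}, the space $\mathcal{C}_{r}(S)=\mathcal{C}_{0}(S_{sr})$ is a cancellative topological monoid with open shifts, and it is $T_{3}$ (indeed regular) by Theorem~\ref{0153} and Corollary~\ref{075}. Moreover $\mathcal{C}_{r}(S)$ is a continuous image of $S$, so it inherits $\sigma$-compactness directly; and since sequential compactness is also preserved by continuous surjections, $\mathcal{C}_{r}(S)$ is again $\sigma$-compact and sequentially compact. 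By Theorem~\ref{0751} it suffices to prove that $\mathcal{C}_{r}(S)$ has countable cellularity.

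Next I would upgrade the reflected monoid to a group. The key structural fact is that a cancellative, sequentially compact (hence countably compact) topological monoid with two-sided cancellation should be forced to contain inverses: a standard argument shows that for each $a$ the sequence of powers $a, a^{2}, a^{3}, \dots$ has a convergent subsequence, and comparing indices via cancellation produces an idempotent, which in a cancellative monoid must be the identity, yielding a power $a^{n}=e$ and thus $a^{-1}=a^{n-1}$. I would invoke exactly such a lemma (the analogue of the compact case handled by Theorem~2.5.2 of \cite{T1}, now in the countably compact setting) to conclude that $\mathcal{C}_{r}(S)$ is a $\sigma$-compact Hausdorff paratopological group, and in fact a topological group once one checks continuity of inversion using open shifts and the separation already obtained.

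Having arrived at a $\sigma$-compact paratopological (topological) group, I would then cite the cellularity theorem that this paper is built around: M. Tkachenko's result that every $\sigma$-compact paratopological group has countable cellularity (the result quoted in the introduction from \cite{T3}). This gives $c(\mathcal{C}_{r}(S))=\aleph_{0}$. Finally, by the equivalences of Theorem~\ref{0751}, $c(S)=c(\mathcal{C}_{r}(S))=\aleph_{0}$, so $S$ has countable cellularity, completing the proof.

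The main obstacle I anticipate is the middle step: rigorously extracting inverses from cancellation plus sequential/countable compactness, and in particular verifying that the resulting inversion map is continuous so that one genuinely lands in the class to which the $\sigma$-compact cellularity theorem applies. The reflection and preservation steps are routine given the machinery already established, and the concluding invocation of Theorem~\ref{0751} is immediate; but the passage from ``cancellative sequentially compact monoid'' to ``$\sigma$-compact group with the right continuity'' is where the real work lies, and it is essential that $\sigma$-compactness be carried along to make the final cellularity estimate available rather than merely countable compactness.
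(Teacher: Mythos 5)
Your overall architecture is exactly the paper's: pass to $\mathcal{C}_{r}(S)$, check via Lemma~\ref{510} and preservation under continuous images that it is a cancellative, $\sigma$-compact, sequentially compact topological monoid, upgrade it to a topological group, apply Tkachenko's result that $\sigma$-compact paratopological groups have countable cellularity (Corollary~2.3 of \cite{T3}), and transfer back with Theorem~\ref{0751}. The paper disposes of the middle step by a single citation: Theorem~6 of \cite{can} (Bokalo--Guran), which states that a sequentially compact Hausdorff cancellative topological semigroup is a topological group. Since you say you would ``invoke exactly such a lemma,'' the proof goes through as in the paper.

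However, the argument you sketch for that lemma is not correct, and you should not lean on it. You claim that sequential compactness plus cancellation forces some power $a^{n}=e$, giving $a^{-1}=a^{n-1}$. That is false already for the circle group $\mathbb{T}$, which is a compact cancellative topological monoid in which most elements have infinite order; no finite power of a generic $a$ equals $e$. The correct argument (as in the compact case, Theorem~2.5.2 of \cite{T1}) produces the inverse as a \emph{limit} of a suitable subsequence of the powers $a^{n_k}$, not as a finite power: one extracts a convergent subsequence $a^{n_k}\to p$, uses cancellation and joint continuity to show that the limit of $a^{n_{k+1}-n_k-1}$ (along a further subsequence) acts as $a^{-1}$, and then verifies continuity of the resulting inversion. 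Also note that to apply Bokalo--Guran one needs $\mathcal{C}_{r}(S)$ to be Hausdorff, which is where the passage to the regular reflection (rather than working with $S$ itself) is doing real work; that point is implicit in your write-up but deserves to be made explicit.
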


\begin{proof}
Let $S$ be a $\sigma$-compact and sequentially compact cancellative topological  monoid  with open shifts.  Lemma \ref{510} guarantees that $\mathcal{C}_{r}(S)$ is cancellative, also being continuos image of $S$, we have $\mathcal{C}_{r}(S)$ is $\sigma$-compact and sequentially compact.  Theorem 6 of \cite{can} implies that $\mathcal{C}_{r}(S)$ is a $\sigma$-compact topological group and from Corollary 2.3 of \cite{T3}, we have $\mathcal{C}_{r}(S)$ has countable celullarity. If we apply  Theorem \ref{0751} we have $S$ has countable celullarity.
\end{proof}


\begin{thebibliography}{X}


\bibitem{A} {A. Arhangelskii  and E.  Reznichenko } \textit{Paratopological and semitopological groups versus topological groups}
{Topology and its applications  {\bf 151} (2005), 107--119}.
\bibitem{T1} {A. Arhangelskii, M. Tkachenko,}  \textit{Topological Groups and related structures}, {Atlantis Studies in Mathematics,} 2008.

\bibitem{can} {B. Bokalo and I. Guran} \textit{Sequentially compact Hausdorff cancellative semigroup is a topological group
}
{ Matematychni Studii,   {\bf 6} (1996), 39--40}.

\bibitem{K} {B. Khosravi } \textit{On Topological Congruences of a Topological Semigroup}
{Bulletin of the
Malasyan Matehmatical
Science Society  {\bf 35} (2012), 257--262}.

\bibitem{GONZO} {G. Gónzalez } \textit{Closed Congruences on semigroups}
{Divulgaciones Matemáticas   {\bf 9} (2001), 117--121}.

\bibitem{herl}  {H. Herrlich } 
 {\it On the concept of reflections in general topology},
 \ {Contrib. Extens. Theory topol. Struct., Proc. Sympos. Berlin 1967} (1969) 105-114.
 
 \bibitem{herlG}  \textsc{H. Herlich, G. Strecker} 
 {\it Categorical topology - its origins, as exemplified by the unfolding of the
theory of topological reflections and coreflections before 1971},
  {Handbook of the History of General Topology}. Springer, (1997)
  
  \bibitem{HU} {J. De Vries and M. Husek} \textit{Preservation of products by functors close to reflector}, {Topology and its applications  {\bf 27} (1987) 171--189}.

\bibitem{HH} {J. Hernández  and S. Hernández} \textit{epireflections in topological algebraic structures,}
{Periodical mathematical hungarical. \bf{Submitted for publication}  arXiv:1704.01146[math.GN] }.




\bibitem{ACOSTA} {L. Acosta y M. Rubio} \textit{Relaciones de equivalencia con abiertos saturados}
{Matemáticas Enseñanza Universitaria {\bf 8} (2006), 1--11}.

\bibitem{semi} { L. Reilly,M.  Mrsevic and M.  Vamanamurthy } \textit{On semiregularitation topoligies}
{Austral. Math. Soc. Series A {\bf 38 }  (1985) 40-54.}.



\bibitem{T3} {M. Tkachenko} \textit{Applications of the reflection functors in paratopological groups}
{Topology and its applications  {\bf 192} (2015), 176--187}.

\bibitem{T2} {M. Tkachenko} \textit{Axioms of separation in paratopological groups and reflection funtors}
{Topology and its applications  {\bf 179} (2015), 200--214}.

\bibitem{T4} {M. Tkachenko } \textit{Axioms of separation in semitopological groups and related functors} {Topology and its applications  {\bf 161} (2014) 364-376}.

\bibitem{T5} {M. Tkachenko } \textit{The Souslin property in free paratopological groups over compact spaces}, {Mat. Zametky {\bf 34} (4) (1983) 601-607 in Russian English transalat: math Notes {\bf 34}(1983) 790-793.}

\bibitem{B} { A. Ravsky T. and Banach T.} \textit{Each paratopological group regular is completely regular}, {Procedings of the american mathematical society {\bf 145} (2016) 1373-1382.}

\bibitem{U} {V. Uspenskij} \textit{A topological group generated by a Lindel$\ddot{o}$f $\Sigma$-sapce has the Souslin property}, {Soviet math. Dokl {\bf 26} (1982) 166-169.}
\end{thebibliography}
\end{document}